\newtheorem{defi}{Definition}[section]
\newtheorem{algorithm}{Weak Galerkin MFEM Algorithm}
\newcommand{\bq}{{\bf q}}
\newcommand{\bw}{{\bf w}}
\newcommand{\bx}{{\bf x}}
\newcommand{\by}{{\bf y}}
\newcommand{\be}{{\bf e}}
\newcommand{\bv}{{\bf v}}
\def\T{{\mathcal T}}
\def\E{{\mathcal E}}
\def\V{{\mathcal V}}
\def\W{{\mathcal W}}
\def\M{{\mathcal M}}
\def\bn{{\bf n}}
\def\bq{{\bf q}}
\def\ljump{{[\![}}
\def\rjump{{]\!]}}
\def\ljump{{[\![}}
\def\rjump{{]\!]}}
\def\3bar{{|\hspace{-.02in}|\hspace{-.02in}|}}
\def\bbQ{{\mathbb{Q}}}
\def\boldeta{{\boldsymbol\eta}}
\title{A weak Galerkin mixed finite element method for second-order elliptic problems}
\author{Junping Wang\thanks{Division of Mathematical Sciences, National
Science Foundation, Arlington, VA 22230 (jwang@\break nsf.gov). The
research of Wang was supported by the NSF IR/D program, while
working at the Foundation. However, any opinion, finding, and
conclusions or recommendations expressed in this material are those
of the author and do not necessarily reflect the views of the
National Science Foundation.} \and Xiu Ye\thanks{Department of
Mathematics, University of Arkansas at Little Rock, Little Rock, AR
72204 (xxye@ualr.edu). This research was supported in part by
National Science Foundation Grant DMS-1115097.}}
\begin{document}
\maketitle

\begin{abstract}
A new weak Galerkin (WG) method is introduced and analyzed for the
second order elliptic equation formulated as a system of two first
order linear equations. This method, called WG-MFEM, is designed by
using discontinuous piecewise polynomials on finite element
partitions with arbitrary shape of polygons/polyhedra. The WG-MFEM
is capable of providing very accurate numerical approximations for
both the primary and flux variables. Allowing the use of
discontinuous approximating functions on arbitrary shape of
polygons/polyhedra makes the method highly flexible in practical
computation. Optimal order error estimates in both discrete $H^1$
and $L^2$ norms are established for the corresponding weak Galerkin
mixed finite element solutions.
\end{abstract}

\begin{keywords}
weak Galerkin, finite element methods,  discrete weak divergence,
second-order elliptic problems, mixed finite element methods
\end{keywords}

\begin{AMS}
Primary, 65N15, 65N30, 76D07; Secondary, 35B45, 35J50
\end{AMS}
\pagestyle{myheadings}

\section{Introduction}\label{Section:Introduction}

Weak Galerkin (WG) refers to a finite element technique for partial
differential equations in which differential operators are
approximated by their weak forms as distributions. In \cite{wy}, a
weak Galerkin method was introduced and analyzed for second order
elliptic equations based on {\em weak gradients}. In this paper, we
shall develop a new weak Galerkin method for second order elliptic
equations formulated as a system of two first order linear
equations. Our model problem seeks a flux function $\bq=\bq(\bx)$
and a scalar function $u=u(\bx)$ defined in an open bounded
polygonal or polyhedral domain $\Omega\subset\mathbb{R}^d\; (d=2,3)$
satisfying
\begin{eqnarray}
\alpha\bq+\nabla u=0,\ \nabla\cdot \bq=f,\quad
\mbox{in}\;\Omega\label{mix}
\end{eqnarray}
and the following Dirichlet boundary condition
\begin{equation}
 u=-g \quad \mbox{on}\; \partial\Omega,\label{bc1}
\end{equation}
where $\alpha=(\alpha_{ij}(\bx))_{d\times d}\in
[L^{\infty}(\Omega)]^{d^2}$ is a symmetric, uniformly positive
definite matrix on the domain $\Omega$. A weak formulation for
(\ref{mix})-(\ref{bc1}) seeks $\bq\in H(div,\Omega)$ and $u\in
L^2(\Omega)$ such that
\begin{eqnarray}
(\alpha\bq,\bv)-(\nabla\cdot\bv,u)&=&\langle g,\bv\cdot\bn\rangle_{\partial\Omega},
\quad\forall\bv\in H(div,\Omega)\label{w-mix1}\\
(\nabla\cdot\bq,w)&=&(f,w),\quad\forall w\in L^2(\Omega).\label{w-mix2}
\end{eqnarray}
Here $L^2(\Omega)$ is the standard space of square integrable
functions on $\Omega$, $\nabla\cdot\bv$ is the divergence of
vector-valued functions $\bv$ on $\Omega$, $H(div,\Omega)$ is the
Sobolev space consisting of vector-valued functions $\bv$ such that
$\bv\in [L^2(\Omega)]^d$ and $\nabla\cdot\bv \in L^2(\Omega)$,
$(\cdot,\cdot)$ stands for the $L^2$-inner product in $L^2(\Omega)$,
and $\langle\cdot,\cdot\rangle_{\partial\Omega}$ is the inner
product in $L^2(\partial\Omega)$.

Galerkin methods based on the weak formulation
(\ref{w-mix1})-(\ref{w-mix2}) and finite dimensional subspaces of
$H(div,\Omega)\times L^2(\Omega)$ with piecewise polynomials are
known as mixed finite element methods (MFEM). MFEMs for
(\ref{mix})-(\ref{bc1}) treat $\bq$ and $u$ as unknown functions and
are capable of providing accurate approximations for both unknowns
\cite{ab,babuska, sue, brezzi,bf,bddf,bdm,rt,wang}. All the existing
MFEMs in literature possess local mass conservation that makes MFEM
a competitive numerical technique in many applications such as oil
reservoir and groundwater flow simulation in porous media. On the
other hand, MFEMs are formulated in subspaces of
$H(div,\Omega)\times L^2(\Omega)$ which requires a certain
continuity of the finite element functions for the flux variable.
More precisely, the flux functions must be sufficiently continuous
so that the usual divergence is well-defined in the classical sense
in $L^2(\Omega)$. This continuity assumption in turn imposes a
strong restriction on the structure of the finite element partition
and the piecewise polynomials defined on them \cite{rt, bdm, bf,
bddf}.

The weak Galerkin method introduced in \cite{wy} (see also
\cite{mwy-wg-stabilization,mwy-wg-stabilization2} for extensions)
was based on a use of {\em weak gradients} in the following
variational formulation: find $u\in H^1(\Omega)$ such that $u=-g$ on
$\partial\Omega$ and
\begin{equation}\label{w1}
(\alpha^{-1}\nabla u, \nabla\phi) = (f,\phi),\qquad \forall \phi\in
H_0^1(\Omega),
\end{equation}
where $H^1(\Omega)$ is the Sobolev space consisting of functions for
which all partial derivatives up to order one are square integrable,
$H_0^1(\Omega)$ is the subspace of $H^1(\Omega)$ consisting of
functions with vanishing value on $\partial\Omega$. Specifically,
the weak Galerkin finite element formulation in
\cite{wy,mwy-wg-stabilization, mwy-wg-stabilization2} can be
obtained from (\ref{w1}) by simply replacing the gradient $\nabla$
by a discrete gradient $\nabla_w$ (it was denoted as $\nabla_d$ in
\cite{wy}) defined by a distributional formula. The discrete
gradient operator $\nabla_w$ is locally-defined on each element. It
has been demonstrated \cite{wy, wy-mw, wy-mz, wy-mgz} that the weak
Galerkin method enjoys an easy-to-implement formulation that is
parameter free and inherits the physical property of mass
conservation locally on each element. Furthermore, the weak Galerkin
method has the flexibility of using discontinuous finite element
functions, as was commonly employed in discontinuous Galerkin and
hybridized discontinuous Galerkin methods \cite{abcm, cgl}.

The goal of this paper is to extend the weak Galerkin method of
\cite{wy,mwy-wg-stabilization, mwy-wg-stabilization2} to the
variational formulation (\ref{w-mix1})-(\ref{w-mix2}) by following
the idea of {\em weak gradients}. It is clear that {\em divergence}
is the principle differential operator in
(\ref{w-mix1})-(\ref{w-mix2}). Thus, an essential part of the
extension is the development of a weakly-defined discrete divergence
operator, denoted by $(\nabla_w\cdot)$, for a class of vector-valued
{\em weak functions} in a finite element setting. Assuming that
there is such a discrete divergence operator $(\nabla_w\cdot)$
defined on a finite element space $\V_h$ for the flux variable
$\bq$, then formally one would have a WG method for
(\ref{mix})-(\ref{bc1}) that seeks $\bq_h\in\V_h$ and $u_h\in\W_h$
satisfying
\begin{eqnarray*}
(\alpha\bq_h,\bv)-(\nabla_w\cdot\bv,u_h)&=&\langle g,\bv\cdot\bn\rangle_{\partial\Omega},\quad\forall\bv\in \V_h\\
(\nabla_w\cdot\bq_h,w)&=&(f,w),\quad\forall w\in \W_h,
\end{eqnarray*}
where $\W_h$ is a properly defined finite element space for the
scalar variable. The rest of the paper will provide details for a
rigorous interpretation and justification of the above formal WG
method, which shall be called WG-MFEM methods.

The WG-MFEM methods have the following features. First of all, the
finite element partition of the domain $\Omega$ is allowed to
consist of arbitrary shape of polygons for $d=2$ and polyhedra for
$d=3$. Secondly, the flux approximation space $\V_h$ consists of two
components, where the first one is given by piecewise polynomials on
each polygon/polyhedra and the second is given by piecewise
polynomials on the edges/faces of the polygon/polyhedra. The second
component shall be used to approximate the normal component of the
flux variable $\bq$ on each edge/face. Furthermore, the scalar
approximation space $\W_h$ consists of piecewise polynomials on each
polygon/polyhedra with one degree higher than that of the flux. For
example, the lowest order of such elements would consist of
piecewise constant for flux and its normal component on each
edge/face plus piecewise linear function for the scalar variable on
each polygon/polyhedra. There is no continuity required for any of
the finite element functions in WG-MFEM.

One close relative of the WG-MFEM is the mimetic finite difference
(MFD) method, see \cite{berndt, bls, veiga-lipnikov-manzini,
veiga-lipnikov-manzini-2} and the reference cited therein. Both
WG-MFEM and MFD share the same flexibility of using
polygonal/polyhedral elements of the domain. The lowest order MFD
approximates the flux by using only piecewise constants on each
edge/face, and the scalar variable by using another piecewise
constant function on each polygonal/polyhedral element, while
WG-MFEM provides a wide class of numerical schemes with arbitrary
order of polynomials. The arbitrary-order mimetic scheme of
\cite{veiga-lipnikov-manzini} is based on the philosophy of
approximating the forms in (\ref{w1}) using nodal-based polynomials,
which has very minimal overlap with the WG-MFEM. It should be
pointed out that there are other numerical methods designed on
general polygonal meshes in existing literature \cite{causin-sacco,
droniou-eymard, abcm, cgl, herbin, halama}. In particular, a
comparison with existing numerical schemes should be conducted on a
certain set of benchmark problems as demonstrated in \cite{herbin}.
More benchmarks can be found in \cite{halama}.

Allowing arbitrary shape for mesh elements provides a convenient
flexibility in both numerical approximation and mesh generation,
especially in regions where the domain geometry is complex. Such a
flexibility is also very much appreciated in adaptive mesh
refinement methods. This is a highly desirable feature in practice
since a single type of mesh technology is too restrictive in
resolving complex multi-dimensional and multi-scale problems
efficiently \cite{ka}.

The paper is organized as follows. In Section \ref{section2}, we
present a discussion of weak divergence operator in some
weakly-defined spaces. In Section \ref{wg-mfem}, we provide a
detailed description and assumptions for the WG-MFEM method. In
Section \ref{section-projections}, we define some local projection
operators and then derive some approximation properties which are
useful in error analysis. In Section \ref{section-erroranalysis}, we
shall establish an optimal order error estimate for the WG-MFEM
approximations in a norm that is related to the $L^2$ for the flux
and $H^1$ for the scalar function. In Section
\ref{error-analysis-ell2}, we derive an optimal order error estimate
in $L^2$ for the scalar approximation by using a duality argument as
was commonly employed in the standard Galerkin finite element
methods \cite{ci, sue}. Finally, we provide some technical results
in the appendix that are critical in dealing with finite element
functions on arbitrary polygons/polyhedra.

\section{Weak Divergence}\label{section2}

The key in weak Galerkin methods is the use of weak derivatives in the place
of strong derivatives in the variational form for the underlying
partial differential equations. For the mixed problem (\ref{mix})
with boundary condition (\ref{bc1}), the corresponding variational
form is given by (\ref{w-mix1}) and (\ref{w-mix2}), where divergence
is the only differential operator involved in the formulation. Thus,
understanding weak divergence is critically important in the
corresponding WG method. The goal of this section is to introduce a
weak divergence operator and its approximation by using piecewise
polynomials.

Let $K$ be any polygonal or polyhedral domain with interior $K^0$
and boundary $\partial K$. A {\em weak vector-valued function} on
the region $K$ refers to a vector-valued function $\bv=\{\bv_0,
\bv_b\}$ such that $\bv_0\in [L^2(K)]^d$ and $\bv_b\cdot\bn\in
H^{-\frac12}(\partial K)$, where $\bn$ is the outward normal
direction of $K$ on its boundary. The first component $\bv_0$ can be
understood as the value of $\bv$ in the interior of $K$, and the
second component $\bv_b$ represents $\bv$ on the boundary of $K$.
The requirement of $\bv_b\cdot\bn\in H^{-\frac12}(\partial K)$
indicates that we are merely interested in the normal component of
$\bv_b$. Note that $\bv_b$ may not be necessarily related to the
trace of $\bv_0$ on $\partial K$ should a trace be well defined.
Denote by $\M(K)$ the space of weak vector-valued functions on $K$;
i.e.,
\begin{equation}\label{hi.888}
\M(K) = \{\bv=\{\bv_0, \bv_b \}:\ \bv_0\in [L^2(K)]^d,\;
\bv_b\cdot\bn\in H^{-\frac12}(\partial K)\}.
\end{equation}
Following the definition of weak gradient introduced in \cite{wy},
we define a {\em weak divergence operator} as follows.
\medskip

\begin{defi}
The dual of $L^2(K)$ can be identified with itself by using the
standard $L^2$ inner product as the action of linear functionals.
With a similar interpretation, for any $\bv\in \M(K)$, the {\em weak
divergence} of $\bv$ is defined as a linear functional $\nabla_w
\cdot\bv$ in the dual space of $H^1(K)$ whose action on each
$\varphi\in H^1(K)$ is given by
\begin{equation}\label{weak-divergence}
(\nabla_w\cdot\bv, \varphi)_K := -(\bv_0, \nabla\varphi)_K + \langle
\bv_b\cdot\bn, \varphi\rangle_{\partial K},
\end{equation}
where $\bn$ is the outward normal direction to $\partial K$,
$(\bv_0,\nabla\varphi)_K=\int_K \bv_0\cdot\nabla\varphi dK$ is the
action of $\bv_0$ on $\nabla\varphi$, and $\langle \bv_b\cdot\bn,
\varphi\rangle_{\partial K}$ is the action of $\bv_b\cdot\bn$ on
$\varphi\in H^{\frac12}(\partial K)$.
\end{defi}

\medskip

The Sobolev space $[H^1(K)]^d$ can be embedded into the space
$\M(K)$ by an inclusion map $i_\M: \ [H^1(K)]^d\to \M(K)$ defined as
follows
$$
i_\M(\bq) = \{\bq|_{K}, \bq|_{\partial K}\}.
$$
With the help of the inclusion map $i_\M$, the Sobolev space
$[H^1(K)]^d$ can be viewed as a subspace of $\M(K)$ by identifying
each $\bq\in [H^1(K)]^d$ with $i_\M(\bq)$. Analogously, a weak
vector-valued function $\bv=\{\bv_0,\bv_b\}\in \M(K)$ is said to be
in $[H^1(K)]^d$ if it can be identified with a function $\bq\in
[H^1(K)]^d$ through the above inclusion map. It is not hard to see
that $\nabla_w\cdot\bv=\nabla\cdot\bv$ if $\bv$ is a smooth function
in $[H^1(K)]^d$.

Next, we introduce a discrete weak divergence operator by
approximating $\left(\nabla_w\ \cdot\ \right)$ in a polynomial
subspace of the dual of $H^1(K)$. To this end, for any non-negative
integer $r\ge 0$, denote by $P_{r}(K)$ the set of polynomials on $K$
with degree no more than $r$. A discrete weak divergence operator,
denoted by $(\nabla_{w,r}\cdot)$, is defined as the unique
polynomial $(\nabla_{w,r}\cdot\bv) \in P_r(K)$ that satisfies the
following equation
\begin{equation}\label{discrete-weak-divergence-element}
(\nabla_{w,r}\cdot\bv, \phi)_K = -(\bv_0,\nabla\phi)_K+
\langle\bv_b\cdot\bn,  \phi\rangle_{\partial K},\qquad \forall
\phi\in P_r(K).
\end{equation}

\section{Weak Galerkin MFEM: Assumptions and Algorithm}\label{wg-mfem}

Let ${\cal T}_h$ be a partition of the domain $\Omega$ consisting of
polygons in 2D and polyhedra in 3D. Denote by ${\cal E}_h$ the set
of all edges or flat faces in ${\cal T}_h$, and let ${\cal
E}_h^0={\cal E}_h\backslash\partial\Omega$ be the set of all
interior edges or flat faces. For every element $T\in \T_h$, we
denote by $|T|$ the area or volume of $T$ and by $h_T$ its diameter.
Similarly, we denote by $|e|$ the length or area of $e$ and by $h_e$
the diameter of edge or flat face $e\in\E_h$. We also set as usual
the mesh size of $\T_h$ by
$$
h=\max_{T\in\T_h} h_T.
$$

\begin{figure}[h!]
\begin{center}
\begin{tikzpicture}
\coordinate (A) at (-3,0); \coordinate (B) at (2,-2); \coordinate
(C) at (3.5, 0.5); \coordinate (D) at (1,1); \coordinate (E) at
(2.2, 3.3); \coordinate (F) at (-1, 3); \coordinate (CC) at (0,0);
\coordinate (Ae) at (1.2,-0.5); \coordinate (xe) at (-1.5, 2.25);
\coordinate (AFc) at (-2, 1.5); \coordinate (AFcLeft) at (-2.5,
1.83333); \draw node[left] at (xe) {$\bx_e$}; \draw node[right] at
(Ae) {$A_e$}; \draw node[left] at (A) {A}; \draw node[below] at (B)
{B}; \draw node[right] at (C) {C}; \draw node[left] at (D) {D};
\draw node[above] at (E) {E}; \draw node[above] at (F) {F}; \draw
node[left] at (AFcLeft) {$\bn$}; \draw
(A)--(B)--(C)--(D)--(E)--(F)--cycle; \draw[dashed](A)--(Ae);
\draw[dashed](F)--(Ae); \draw[->] (Ae)--(xe); \draw[->]
(AFc)--(AFcLeft); \filldraw[black] (A) circle(0.05);
    \filldraw[black] (F) circle(0.05);
    \filldraw[black] (Ae) circle(0.05);
    \filldraw[black] (xe) circle(0.035);
\end{tikzpicture}
\caption{Depiction of a shape-regular polygonal element $ABCDEFA$.}
\label{fig:shape-regular-element}
\end{center}
\end{figure}
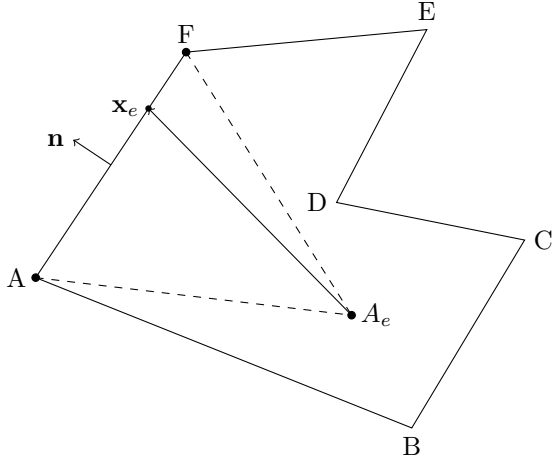

All the elements of $\T_h$ are assumed to be closed and simply
connected polygons or polyhedra, see Fig.
\ref{fig:shape-regular-element}. We need some shape regularity for
the partition $\T_h$ described as below.

\medskip
\begin{description}
\item[A1:] \ Assume that there exist two positive constants $\varrho_v$ and $\varrho_e$
such that for every element $T\in\T_h$ we have
\begin{equation}\label{a1}
\varrho_v h_T^d\leq |T|,\qquad \varrho_e h_e^{d-1}\leq |e|
\end{equation}
for all edges or flat faces of $T$.

\item[A2:] \ Assume that there exists a positive constant $\kappa$ such that for every element
$T\in\T_h$ we have
\begin{equation}\label{a2}
\kappa h_T\leq h_e
\end{equation}
for all edges or flat faces $e$ of $T$.

\item[A3:] \ Assume that the mesh edges or faces are flat. We
further assume that for every $T\in\T_h$, and for every edge/face
$e\in \partial T$, there exists a pyramid $P(e,T, A_e)$ contained in
$T$ such that its base is identical with $e$, its apex is $A_e\in
T$, and its height is proportional to $h_T$ with a proportionality
constant $\sigma_e$ bounded away from a fixed positive number
$\sigma^*$ from below. In other words, the height of the pyramid is
given by $\sigma_e h_T$ such that $\sigma_e\ge \sigma^*>0$. The
pyramid is also assumed to stand up above the base $e$ in the sense
that the angle between the vector $\bx_e-A_e$, for any $x_e\in e$,
and the outward normal direction of $e$ (i.e., the vector $\bn$ in
Fig. \ref{fig:shape-regular-element}) is strictly acute by falling
into an interval $[0, \theta_0]$ with $\theta_0< \frac{\pi}{2}$.

\item[A4:] \ Assume that each $T\in\T_h$ has a circumscribed simplex $S(T)$ that is
shape regular and has a diameter $h_{S(T)}$ proportional to the
diameter of $T$; i.e., $h_{S(T)}\leq \gamma_* h_T$ with a constant
$\gamma_*$ independent of $T$. Furthermore, assume that each
circumscribed simplex $S(T)$ intersects with only a fixed and small
number of such simplices for all other elements $T\in\T_h$.
\end{description}

\medskip
Figure \ref{fig:shape-regular-element} depicts a polygonal element
that is shape regular. Reader are referred to \cite{bls} for a
similar, but different type of shape regularity assumption for the
underlying finite element partition of the domain. The shape
regularity is required for deriving error estimates for locally
defined projection operators to be detailed in later sections.

\medskip
Recall that, on each element $T\in\T_h$, there is a space of weak
vector-valued functions $\M(T)$ defined as in (\ref{hi.188}). Denote
by $\M$ the weak vector-valued function space on $\T_h$ given by
\begin{equation}\label{vspace}
 \M:=\prod_{T\in\T_h}\M(T).
\end{equation}
Note that functions in $\M$ are defined on each element, and there
are two sided values of $\bv_b$ on each interior edge/face $e$,
depicted as $\bv_b|_{\partial T_1}$ and $\bv_b|_{\partial T_2}$ in
Fig. \ref{fig:triangle-11}.

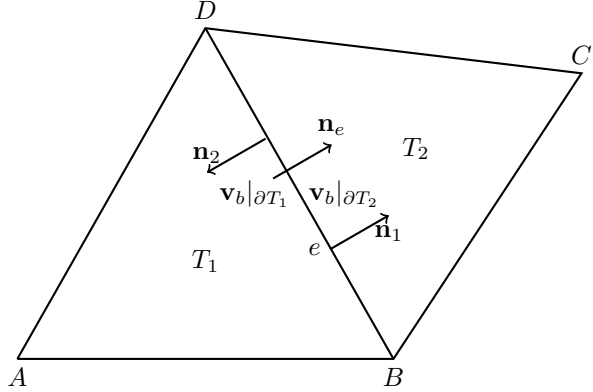
\begin{figure}[h]
\begin{center}
\begin{tikzpicture}
    \path (0,0) coordinate (A1);
    \path (5,0) coordinate (A2);
    \path(7.5,3.8) coordinate (A3);
    \path (2.5, 4.4) coordinate (A4);
    \path (3.4, 2.4) coordinate (A24);
    \path (3.75, 2.2) coordinate (A24C);
    \path (4.16, 1.46) coordinate (A24Low);
    \path (3.3, 2.93) coordinate (A24Up);
    \path (A24) ++(30:0.9cm) coordinate (A24To);
    \path (A24Low) ++(30:0.9cm) coordinate (A24LowTo);
    \path (A24Up) ++(210:0.9cm) coordinate (A24UpTo);
    \path (2.5, 1.3) coordinate (T1);
    \path (5.3,2.8) coordinate (T2);
    \draw [thick] (A1) -- (A2) -- (A3) -- (A4) -- (A1);
    \draw [thick] (A2) -- (A4);
    \draw[->,thick] (A24) -- (A24To) node[above]{$\bn_e$};
    \draw[->,thick] (A24Low) -- (A24LowTo) node[below]{$\bn_1$};
    \draw[->,thick] (A24Up) -- (A24UpTo) node[above]{$\bn_2$};
    \draw node[left] at (A24Low) {$e$};
    \draw node at (T1) {$T_1$};
    \draw node at (T2) {$T_2$};
    \draw node[below] at (A1) {$A$};
    \draw node[below] at (A2) {$B$};
    \draw node[above] at (A3) {$C$};
    \draw node[above] at (A4) {$D$};
    %\draw node[left] at (A24Low) {$e$};
    %\draw node[left] at (A24Low) {$\ljump v \rjump_e=v|_{_{\partial T_1}}$};
    %\draw node[right] at (A24Low) {$v|_{_{\partial T_2}}$};
    %\draw node at (A24Low) {$-$};
    %\draw node at (A24Up) {$+$};
    %\draw node[left] at (A24Up) {$\lambda|_{_{\partial T_1}}$};
    %\draw node[right] at (A24Up) {$\lambda|_{_{\partial T_2}}=\ssum\lambda_e$};
    \draw node[left] at (A24C) {$\bv_b|_{\partial T_1}$};
    \draw node[right] at (A24C) {$\bv_b|_{\partial T_2}$};
\end{tikzpicture}
\end{center}
\caption{An interior edge shared by two elements}
\label{fig:triangle-11}
\end{figure}
Let $\V\subset\M$ be a subspace of $\M$ consisting of weak
vector-valued functions which are continuous across each interior
edge/face $e$ in the normal direction; i.e.,
\begin{equation}\label{thanks.100}
\V= \left\{\bv\in\M: \ \exists\ \bq\in H(div,\Omega)\ s.t. \
\bq|_{\partial T}\cdot\bn = (\bv_b)_{\partial T}\cdot\bn, \ \forall
T\in \T_h\right\}.
\end{equation}
The weak divergence operator as defined in (\ref{weak-divergence})
can be extended to any weak vector-valued function $\bv\in \V$ by
taking weak divergence locally on each element $T$. More precisely,
the weak divergence of any $\bv\in\V$ is defined element-by-element
as follows:
$$
\nabla_w\cdot \bv= \nabla_w\cdot(\bv|_{T}),\qquad \mbox{on } \
T\in\T_h.
$$
Similarly, the discrete weak divergence as defined in
(\ref{discrete-weak-divergence-element}) can be extended to $\V$ by
defining
\begin{equation}\label{discrete-weak-divergence-global}
\nabla_{w,r}\cdot \bv = \nabla_{w,r}\cdot(\bv|_{T}),\qquad \mbox{on
} \ T\in\T_h.
\end{equation}

\medskip
The definition of weak divergence of $\bv=\{\bv_0,\bv_b\}\in\V$
requires the value of $\bv$ on each element $T$, namely $\bv_0$, and
the normal component of $\bv_b$ on each edge or face $e\in\E_h$.
Thus, it is the normal component of $\bv$ on each $e\in\E_h$ that
really enters into the equation of discussion in numerical methods.
For convenience, we introduce a set of normal directions on ${\cal
E}_h$ as follows
\begin{equation}\label{thanks.101}
{\cal D}_h = \{\bn_e: \mbox{ $\bn_e$ is unit and normal to $e$},\
e\in {\cal E}_h \}.
\end{equation}
Figure \ref{fig:triangle-11} shows one example of $\bn_e$ pointing
from the element $T_1$ to $T_2$. In the rest of this paper, we will
be concerned with a subspace of $\V$ in which the second component
of $\bv=\{\bv_0, \bv_b\}$ represents the normal component of $\bv$
on each $e\in \E_h$; i.e., $(\bv_b)|_{e}=(\bv|_{e}\cdot\bn_e)\bn_e$.

A discrete weak vector-valued function $\bv=\{\bv_0,\;\bv_b\}$
refers to weak vector-valued functions in which both $\bv_0$ and
$\bv_b$ are vector-valued polynomials. Since the second component is
represented as $\bv_b=v_b \bn_e$ where $\bn_e$ is the prescribed
normal direction to $e\in{\cal E}_h$, then $v_b$ is required to be a
polynomial on each edge or flat face of $\partial T$. Recall that,
for each element $T\in {\cal T}_h$, $P_k(T)$ denotes the set of
polynomials on $T$ with degree no more than $k\ge 0$ and $P_\ell(e)$
is the set of polynomials on $e\in\E_h$ with degree no more than
$\ell\ge 0$.

We now introduce two finite element spaces which are necessary for
formulating our numerical schemes. The first one corresponds to the
scalar (or pressure) variable defined as follows
\[
\W_h=\{w\in L^2(\Omega);\;w|_T\in P_{k+1}(T)\},
\]
where $k\ge 0$ is a non-negative integer. The second one corresponds
to vector-valued functions and their normal components on the set
${\cal E}_h$ of edges or flat faces, and is given by
\begin{equation}\label{vh}
\V_h=\left\{ \bv=\{\bv_0, \bv_b\}:\; \bv_0|_T\in [P_k(T)]^d,
\bv_b|_e=v_b\bn_e, v_b\in P_k(e),\;e\in {\cal E}_h\right\}.
\end{equation}
The pair $\V_h\times \W_h$ forms a finite element approximation
space for the unknowns $\bq$ and $u$ of the problem
(\ref{w-mix1})-(\ref{w-mix2}). For simplicity of notation and
discussion, we shall refer the above defined finite element spaces
as $([P_k(T)]^d,P_k(e), P_{k+1}(T))$ element. The lowest order of
such element makes use of piecewise constant for the flux variable
on each element $T$ and its edges/faces and piecewise linear for the
pressure (scalar) variable.

The discrete weak divergence $(\nabla_{w,k+1}\cdot)$ as defined in
(\ref{discrete-weak-divergence-global}) and
(\ref{discrete-weak-divergence-element}) then provides a linear map
from the finite element space $\V_h$ to $\W_h$. In particular, for
any $\bv\in \V_h$ and $w\in \W_h$, we have the following relation
\begin{eqnarray}\nonumber
(\nabla_{w,k+1}\cdot\bv, w): &=& \sum_{T\in {\cal T}_h}
(\nabla_{w,k+1}\cdot\bv, w)_T\\
&=& \sum_{T\in\T_h} \left( \int_{\partial T} (\bv_b\cdot\bn ) w ds -
\int_T \bv_0\cdot (\nabla w) dT\right).\label{tg.01}
\end{eqnarray}
With an abuse of notation, we shall use $(\nabla_w\cdot)$ to denote
the discrete weak divergence operator $(\nabla_{w,k+1}\cdot)$ in the
rest of this paper.

With the discrete divergence given by (\ref{tg.01}), one might
naively formulate a finite element method by using (\ref{w-mix1})
and (\ref{w-mix2}) as follows. Find $\bq_h=\{\bq_{0},\bq_{b}\}\in
\V_h$ and $u_h\in \W_h$ such that
\begin{eqnarray}
(\alpha\bq_{0},\bv_0)-(\nabla_w\cdot\bv,u_h)&=&\langle g,
\bv_b\cdot\bn\rangle_{\partial\Omega},\quad\forall\bv=\{\bv_0,\bv_b\}\in \V_h\label{nw-mix1}\\
(\nabla_w\cdot\bq_h,w)&=&(f,w),\quad\forall w\in
\W_h.\label{nw-mix2}
\end{eqnarray}
Unfortunately, due to an insufficient enforcement on the component
$\bq_{b}$, the resulting system of linear equations from
(\ref{nw-mix1})-(\ref{nw-mix2}) generally does not have a unique
solution. One remedy to this problem is to stabilize the bilinear
form $(\alpha\bq_{0},\bv_0)$ by requiring some communication between
$\bq_{0}$ and $\bq_{b}$. To this end, we introduce three bilinear
forms as follows.
\begin{eqnarray}
a(\boldeta,\bv) & = & (\alpha\boldeta_0,\bv_0),\qquad \boldeta,\bv\in \V_h,\label{a}\\
b(\boldeta,w) & = & (\nabla_w\cdot\boldeta,\;w),\qquad \boldeta\in \V_h, w\in \W_h, \label{b} \\
s(\boldeta,\bv) & = & \rho\sum_{T\in {\cal T}_h}
h_T\langle(\boldeta_0-\boldeta_b)\cdot\bn,\;(\bv_0-\bv_b)\cdot\bn
\rangle_{\partial T}, \qquad \boldeta,\bv\in \V_h,\label{c}
\end{eqnarray}
and a stabilized bilinear form $a_s(\cdot,\cdot)$:
$$
a_s(\boldeta,\bv):=a(\boldeta,\bv)+s(\boldeta,\bv).
$$
Here $\rho>0$ is any parameter and $h_T$ is the size of $T$. In
practical computation, one might chose $\rho=1$ and substitute $h_T$
by the mesh size $h$ for quasi-uniform partitions; i.e., partitions
for which $h_T/h$ is bounded from below and above uniformly in $T$.

\medskip

\begin{algorithm} Let $\T_h$ be a shape regular finite element
partition of $\Omega$ and $\V_h\times\W_h$ be the corresponding
finite element spaces consisting of $([P_k(T)]^d,P_k(e),
P_{k+1}(T))$ elements with $k\ge 0$. An approximation for
(\ref{mix})-(\ref{bc1}) is given by seeking
$\bq_h=\{\bq_0,\bq_b\}\in \V_h$ and $u_h\in \W_h$ such that
\begin{eqnarray}
a_s(\bq_h,\bv)-b(\bv, u_h)&=&\langle g,\;\bv_b\cdot\bn\rangle_{\partial\Omega},\label{wg1}\\
b(\bq_h, w)&=&(f,\;w),\label{wg2}
\end{eqnarray}
for any $\bv=\{\bv_0, \bv_b\}\in \V_h$ and $w\in \W_h$. The pair of
solutions $(\bq_h;u_h)$ is called a weak Galerkin mixed finite
element approximation of (\ref{mix})-(\ref{bc1}).
\end{algorithm}

\smallskip

The WG-MFEM scheme (\ref{wg1})-(\ref{wg2}) retains the mass
conservation property of (\ref{mix})-(\ref{bc1}) locally on each
element. This can be seen by choosing a piecewise constant test
function $w$ in (\ref{wg2}). More precisely, for any element $T\in
\T_h$, let $w$ assume the value $1$ on $T$ and zero elsewhere. It
follows from (\ref{wg2}) that
$$
(\nabla_w\cdot\bq_h, 1)_T = (f, 1)_T.
$$
The definition of weak gradient (\ref{tg.01}) then implies the
following identity
$$
\int_{\partial T} \bq_b\cdot\bn ds = \int_T f dx,
$$
which asserts a mass conservation for the WG-MFEM method.

The rest of the paper shall provide a theoretical foundation for the
solvability and accuracy of the weak Galerkin mixed finite element
scheme (\ref{wg1})-(\ref{wg2}).

\section{Local Projections: Definition and
Properties}\label{section-projections}

We introduce two projection operators into the finite element spaces
$\V_h$ and $\W_h$ by using local $L^2$ projections. To this end, for
each element $T\in \T_h$, denote by $Q_{0}$ the usual $L^2$
projection from $L^2(T)$ onto $P_k(T)$. Similarly, for each edge or
flat face $e\in {\cal E}_h$, let $Q_{b}$ be the $L^2$ projection
from $L^2(e)$ onto $P_k(e)$. For any $\bv=\{\bv_0, v_b\bn_e\} \in
\V$ with $v_b\in L^2(e)$ on each edge/face $e$, denote by
$$
Q_h\bv:=\{Q_0(\bv_0),\;Q_b(v_b)\bn_e\}
$$
the $L^2$ projection of $\bv$ in $\V_h$. Next, denote by $\bbQ_h$
the $L^2$ projection from $L^2(\Omega)$ onto $\W_h$. Observe that
$\bbQ_h$ is in fact a composition of locally defined $L^2$
projections into the polynomial space $P_{k+1}(T)$ for each element
$T\in {\cal T}_h$.

In the usual finite element error analysis, one often reduces the
error for finite element solutions into the error between the exact
solution and an appropriately defined local projection or
interpolation of the solution. For the WG-MFEM method discussed in
previous sections, this refers to the error between the exact
solution and its local $L^2$ projection. The difficulty in
estimating the projection error arises from the fact that the finite
element partition $\T_h$ contains arbitrary polygons or polyhedra
that are different from the usual simplices as commonly employed in
the standard finite element methods \cite{ci}.

For simplicity of notation, we shall use $\lesssim$ to denote less
than or equal to up to a constant independent of the mesh size,
variables, or other parameters appearing in the inequality.

\begin{lemma}\label{L2errorbound}
Let $\T_h$ be a finite element partition of $\Omega$ satisfying the
shape regularity assumptions {\bf A1 - A4} as given in Section
\ref{wg-mfem}. Then, we have
\begin{eqnarray}
&\sum_{T\in\T_h} \|\bq-Q_0\bq\|_{T}^2 \lesssim h^{2(s+1)}
\|\bq\|_{s+1}^2,\quad 0\le s\le k\label{Jone-term}\\
&\sum_{T\in\T_h} \|\nabla(\bq-Q_0\bq)\|_{T}^2 \lesssim h^{2s}
\|\bq\|_{s+1}^2,\quad 0\le s\le k\label{Jtwo-term}\\
&\sum_{T\in\T_h}
\left(\|u-\bbQ_hu\|_T^2+h_T^2\|\nabla(u-\bbQ_hu)\|_T^2\right)
\lesssim h^{2(s+1)}\|u\|_{s+1}^2,\quad 0\le s \le
k+1.\label{Jthree-term}
\end{eqnarray}
\end{lemma}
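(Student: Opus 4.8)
The plan is to reduce each of the three estimates to a classical polynomial approximation result on the \emph{circumscribed simplex} $S(T)$ provided by assumption \textbf{A4}, and then transfer the bound back to $T$ using the fact that $T\subset S(T)$ together with $h_{S(T)}\le\gamma_*h_T$. First I would fix an element $T\in\T_h$ and, for $\bq\in[H^{s+1}(T)]^d$, invoke a Sobolev extension operator from $T$ to $S(T)$ (or, more conveniently, work with a global extension of $\bq$ to a fixed neighborhood of $\Omega$ and restrict it to $S(T)$); let $\tilde\bq$ denote this extension, with $\|\tilde\bq\|_{s+1,S(T)}\lesssim\|\bq\|_{s+1,\omega_T}$ on a slightly enlarged patch $\omega_T$. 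On the shape-regular simplex $S(T)$ one has the standard Bramble–Hilbert / scaling estimate: for the $L^2$-projection $\Pi_{S(T)}$ onto $P_k(S(T))$,
\[
\|\tilde\bq-\Pi_{S(T)}\tilde\bq\|_{j,S(T)}\lesssim h_{S(T)}^{s+1-j}\,\|\tilde\bq\|_{s+1,S(T)},\qquad j=0,1,\ 0\le s\le k.
\]
Since $P_k(S(T))|_T\subset P_k(T)$ and $Q_0$ is the $L^2(T)$-projection onto $P_k(T)$, $Q_0$ is \emph{optimal} in the $L^2(T)$ norm among all polynomials of degree $\le k$ on $T$; in particular $\|\bq-Q_0\bq\|_T\le\|\bq-\Pi_{S(T)}\tilde\bq\|_{0,T}\le\|\tilde\bq-\Pi_{S(T)}\tilde\bq\|_{0,S(T)}$, which yields \eqref{Jone-term} after summing over $T$ (the finite-overlap clause in \textbf{A4} keeps $\sum_T\|\tilde\bq\|_{s+1,\omega_T}^2\lesssim\|\bq\|_{s+1,\Omega}^2$). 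The same argument applied verbatim to $\bbQ_h$ on $P_{k+1}(T)$ gives the $L^2$ part of \eqref{Jthree-term} for $0\le s\le k+1$.

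For the $H^1$-type estimates \eqref{Jtwo-term} and the gradient term in \eqref{Jthree-term}, the subtlety is that $Q_0$ (resp.\ $\bbQ_h$) is $L^2$-optimal but \emph{not} $H^1$-optimal, so one cannot directly replace it by $\Pi_{S(T)}$ inside an $H^1$ norm. The standard fix is a local inverse inequality: writing $\nabla(\bq-Q_0\bq)=\nabla(\bq-\Pi_{S(T)}\tilde\bq)+\nabla(\Pi_{S(T)}\tilde\bq-Q_0\bq)$, the first term is controlled by $h_{S(T)}^{s}\|\tilde\bq\|_{s+1,S(T)}$ directly; for the second, both $\Pi_{S(T)}\tilde\bq|_T$ and $Q_0\bq$ are polynomials of degree $\le k$ on $T$, so an inverse estimate on $T$ gives $\|\nabla(\Pi_{S(T)}\tilde\bq-Q_0\bq)\|_T\lesssim h_T^{-1}\|\Pi_{S(T)}\tilde\bq-Q_0\bq\|_T\lesssim h_T^{-1}\big(\|\tilde\bq-\Pi_{S(T)}\tilde\bq\|_{0,S(T)}+\|\bq-Q_0\bq\|_T\big)\lesssim h_T^{-1}h^{s+1}\|\tilde\bq\|_{s+1}$, using \eqref{Jone-term}. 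Combining, $\|\nabla(\bq-Q_0\bq)\|_T\lesssim h_T^{s}\|\tilde\bq\|_{s+1,S(T)}$; summing gives \eqref{Jtwo-term}, and the identical manipulation on $P_{k+1}$ produces the $h_T^2\|\nabla(u-\bbQ_h u)\|_T^2$ term in \eqref{Jthree-term}.

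The main obstacle — and the reason assumptions \textbf{A1–A4} are needed — is that the inverse inequality $\|\nabla p\|_T\lesssim h_T^{-1}\|p\|_T$ for $p\in P_r(T)$ is \emph{not} automatic on an arbitrarily-shaped polygon: it can degenerate if $T$ has thin spikes or if a face is tiny relative to $h_T$. This is exactly what \textbf{A1} (non-degenerate volume and face measures), \textbf{A2} ($h_e\gtrsim h_T$), and \textbf{A3} (the interior pyramid $P(e,T,A_e)$ standing up above each face with controlled height) are designed to guarantee; I expect the paper defers the proof of this polygonal inverse estimate (and a companion trace inequality $\|p\|_{\partial T}^2\lesssim h_T^{-1}\|p\|_T^2$) to the appendix, and I would cite it here as the key technical input. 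Apart from invoking that inverse estimate and the extension operator, every other step is routine Bramble–Hilbert scaling on the shape-regular simplices, so the heart of the argument is really just (i) \textbf{A4} to replace $T$ by $S(T)$ for the approximation-theory half, and (ii) the appendix inverse inequality to handle the gradient terms.
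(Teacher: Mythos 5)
Your proposal is correct and follows essentially the same route as the paper: polynomial approximation on the circumscribed simplex $S(T)$ from \textbf{A4}, the $L^2(T)$-optimality of $Q_0$ (resp. $\bbQ_h$), an inverse estimate applied to the polynomial difference of the two projections to handle the gradient terms, and the finite-overlap property of the simplices $S(T)$ for the final summation. The only cosmetic difference is that you invoke the polygonal inverse inequality on $T$ (the paper's appendix lemma) directly, whereas the paper applies the standard inverse inequality on $S(T)$ and then returns to $T$ via the domain inverse inequality and the ball inside $T$ guaranteed by \textbf{A3} --- the same machinery packaged slightly differently.
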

\begin{proof}
To derive (\ref{Jone-term}), let $S(T)$ be the circumscribed simplex
of $T$ on which $\bq$ can be defined by smooth extension if
necessary. Let $\tilde Q_0\bq$ be the projection of $\bq$ in the
element defined on $S(T)$. It follows that
\begin{equation}\label{aaa-new.99}
\|\bq-Q_0\bq\|_{T} \leq \|\bq-\tilde Q_0\bq\|_{T}\leq \|\bq-\tilde
Q_0\bq\|_{S(T)}\lesssim h_T^{s+1}\|\bq\|_{s+1,S(T)}
\end{equation}
Using the above estimate we obtain
\begin{equation}\label{www.08}
\sum_{T\in\T_h} \|\bq-Q_0\bq\|_{T}^2 \lesssim
h^{2(s+1)}\sum_{T\in\T_h} \|\bq\|_{s+1, S(T)}^2.
\end{equation}
It follows from the assumption {\bf A4} that the set of the
circumscribed simplices $\{S(T):\ T\in\T_h\}$ has a fixed and small
number of overlaps. Thus, the following estimate holds true
$$
\sum_{T\in\T_h} \|\bq\|_{s+1, S(T)}^2\lesssim \|\bq\|_{s+1}^2.
$$
Substituting the above inequality into (\ref{www.08}) yields the desired
estimate (\ref{Jone-term}).

To derive (\ref{Jtwo-term}), we use the triangle inequality and the
standard error estimate on $S(T)$ to obtain
\begin{eqnarray}
\|\nabla(\bq-Q_0\bq)\|_T &\leq & \|\nabla(\bq-\tilde Q_0\bq)\|_{S(T)}
+\|\nabla(\tilde Q_0\bq-Q_0\bq)\|_{S(T)}\nonumber\\
&\le & Ch_T^s\|\bq\|_{s+1,S(T)} + Ch_T^{-1}\|\tilde
Q_0\bq-Q_0\bq\|_{S(T)},\label{aaa-new.88}
\end{eqnarray}
where we have also used the standard inverse inequality in the
second line. Notice that the assumption {\bf A3} on $\T_h$ implies
that there exists a ball $B\subset T$ with a diameter proportional
to $h_T$. Thus, we have from Lemma \ref{appendix.thm} (see Appendix)
that
\begin{eqnarray*}
\|\tilde Q_0\bq-Q_0\bq\|_{S(T)}&&\lesssim \|\tilde Q_0\bq-Q_0\bq\|_{B}
\le \|\tilde Q_0\bq-Q_0\bq\|_{T}\\
&& \le \|\bq-\tilde Q_0\bq\|_{T} + \|\bq-Q_0\bq\|_{T}\\
&&\lesssim h_T^{s+1}\|\bq\|_{s+1,S(T)}.
\end{eqnarray*}
Substituting the above estimate into (\ref{aaa-new.88}) yields
$$
\|\nabla(\bq-Q_0\bq)\|_T\lesssim h_T^s\|\bq\|_{s+1,S(T)}.
$$
Summing up the above estimate over $T\in\T_h$ leads to the desired
estimate (\ref{Jtwo-term}). Finally, the estimate
(\ref{Jthree-term}) can be established analogously to
(\ref{Jone-term}) and (\ref{Jtwo-term}).
\end{proof}

\medskip
We shall derive two equations that play useful roles in the error
analysis for the WG-MFEM. The first equation is given by the
following Lemma.
\begin{lemma}\label{Lemma4-1}
For any $\bq\in [H^1(\Omega)]^d$, let $Q_h\bq\in \V_h$ be the
projection given by local $L^2$ projections. Then, on each element
$T$, we have
\begin{eqnarray}
(\nabla_w\cdot (Q_h\bq),\;w)_T=(\nabla\cdot\bq,
w)_T-\langle\bq\cdot\bn-Q_b(\bq\cdot\bn), w\rangle_{\partial
T}\label{key.before}
\end{eqnarray}
for all $w\in P_{k+1}(T)$. Moreover, by summing (\ref{key.before})
over all $T\in\T_h$ we obtain
\begin{eqnarray}
b(Q_h\bq,w)=(\nabla\cdot\bq,
w)-\sum_{T\in\T_h}\langle\bq\cdot\bn-Q_b(\bq\cdot\bn),
w\rangle_{\partial T}.\label{key}
\end{eqnarray}

\end{lemma}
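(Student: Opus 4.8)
The plan is to unwind the definition of the discrete weak divergence applied to $Q_h\bq$ and then reduce everything to an elementwise integration by parts. Fix $T\in\T_h$ and recall that $Q_h\bq=\{Q_0\bq,\ Q_b(\bq\cdot\bn_e)\bn_e\}$. Applying the defining relation (\ref{discrete-weak-divergence-element}) with $r=k+1$ (note $(\nabla_w\cdot)$ stands for $(\nabla_{w,k+1}\cdot)$), for every $w\in P_{k+1}(T)$ we get
\begin{equation*}
(\nabla_w\cdot(Q_h\bq),w)_T=-(Q_0\bq,\nabla w)_T+\langle Q_b(\bq\cdot\bn_e)\,(\bn_e\cdot\bn),\ w\rangle_{\partial T}.
\end{equation*}
On each edge or face $e\subset\partial T$ the scalar $\bn_e\cdot\bn$ is a constant $\pm1$, so it may be pulled inside $Q_b$, giving $Q_b(\bq\cdot\bn_e)\,(\bn_e\cdot\bn)=Q_b(\bq\cdot\bn)$ with $\bn$ the outward normal of $T$; this identification of the boundary term is the one spot where a little care about the orientation of $\bn_e$ versus $\bn$ is needed, and it is harmless.

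Next I would invoke the degree count that motivates the choice of spaces: for $w\in P_{k+1}(T)$ one has $\nabla w\in[P_k(T)]^d$, so since $Q_0$ is the $L^2$-projection onto $[P_k(T)]^d$ we may replace $Q_0\bq$ by $\bq$ in the volume term, $(Q_0\bq,\nabla w)_T=(\bq,\nabla w)_T$. Because $\bq\in[H^1(\Omega)]^d$, ordinary integration by parts on $T$ yields $-(\bq,\nabla w)_T=(\nabla\cdot\bq,w)_T-\langle\bq\cdot\bn,w\rangle_{\partial T}$. Combining the two boundary contributions gives
\begin{equation*}
(\nabla_w\cdot(Q_h\bq),w)_T=(\nabla\cdot\bq,w)_T-\langle\bq\cdot\bn-Q_b(\bq\cdot\bn),\ w\rangle_{\partial T},
\end{equation*}
which is precisely (\ref{key.before}).

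Finally, (\ref{key}) follows by summing (\ref{key.before}) over all $T\in\T_h$, using the elementwise definition (\ref{discrete-weak-divergence-global}) of $\nabla_w\cdot$ on $\V_h$ and the definition (\ref{b}) of $b(\cdot,\cdot)$. There is no real obstacle here: the whole argument is a direct application of the definitions together with integration by parts. The only point that must be respected is that the scalar test space $P_{k+1}(T)$ is exactly one degree higher than the flux space $[P_k(T)]^d$, which is what makes $(Q_0\bq,\nabla w)_T=(\bq,\nabla w)_T$ valid and hence the clean identity possible.
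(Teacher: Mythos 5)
Your proposal is correct and follows essentially the same route as the paper's proof: unwind the definition of $(\nabla_{w,k+1}\cdot)$ applied to $Q_h\bq$, identify $Q_b(\bq\cdot\bn_e)\,\bn_e\cdot\bn$ with $Q_b(\bq\cdot\bn)$ using $\bn_e=\pm\bn$, use that $\nabla w\in[P_k(T)]^d$ to replace $Q_0\bq$ by $\bq$, and finish with the divergence theorem and summation over $T$. No gaps.
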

\begin{proof}
For $\bq\in [H^1(\Omega)]^d$, we apply the definition of $Q_h$ and
the weak divergence $\nabla_w\cdot (Q_h\bq)$ to obtain
\begin{eqnarray}
(\nabla_w\cdot Q_h\bq,\;w)_T&=&-(Q_0\bq,\;\nabla w)_T+\langle
Q_b(\bq\cdot\bn_e)\bn_e\cdot\bn,\; w\rangle_{\partial T}\nonumber\\
&=&-(Q_0\bq,\;\nabla w)_T+\langle Q_b(\bq\cdot\bn),\;
w\rangle_{\partial T}\label{thanksgiving.201}
\end{eqnarray}
for all $w\in P_{k+1}(T)$. Here we have used the fact that
$\bn_e=\pm\bn$. Since $Q_0$ is the $L^2$ projection into
$[P_k(T)]^d$, then
\begin{eqnarray}
(Q_0\bq,\;\nabla w)_T &=& (\bq, \nabla w)_T\nonumber\\
&=& -(\nabla\cdot\bq, w)_T + \langle \bq\cdot\bn, w\rangle_{\partial
T},\label{thanksgiving.200}
\end{eqnarray}
where we have applied the usual divergence theorem in the second
line. Substituting (\ref{thanksgiving.200}) into
(\ref{thanksgiving.201}) yields (\ref{key.before}). This completes
the proof of the lemma.
\end{proof}
\medskip

The second equation is concerned with the bilinear form
$(\nabla_w\cdot\bv, \bbQ_{h}w)$ for $\bv\in \V_h$ and $w\in
H^1(\Omega)$. Using the definition of $\bbQ_h$ and the integration
by parts, we have for any $\bv=\{\bv_0, \bv_b \}\in \V_h$ and $w\in
H^1(T)$ that
\begin{eqnarray*}
(\nabla_w\cdot \bv,\; \bbQ_hw)_T &=& -(\bv_0,\; \nabla (\bbQ_hw))_T
+ \langle \bv_b\cdot\bn,\;
\bbQ_hw\rangle_{\partial T}\nonumber\\
&=& (\nabla\cdot \bv_0,\; \bbQ_hw)_T + \langle (\bv_b - \bv_0)\cdot
\bn,\;
\bbQ_hw\rangle_{\partial T}\nonumber\\
&=& (\nabla\cdot \bv_0,\; w)_T + \langle (\bv_b - \bv_0)\cdot \bn,\;
\bbQ_hw\rangle_{\partial T}\nonumber\\
&=& -(\bv_0,\; \nabla w)_T + \langle \bv_0\cdot\bn,\;
w\rangle_{\partial T}+\langle (\bv_b - \bv_0)\cdot \bn,\; \bbQ_hw\rangle_{\partial T}\nonumber\\
&=& -(\bv_0,\; \nabla w)_T + \langle (\bv_0-\bv_b)\cdot \bn,\;
w-\bbQ_hw\rangle_{\partial T} + \langle \bv_b\cdot\bn,\;
w\rangle_{\partial T}.
\end{eqnarray*}
The result can be summarized as follows.
\begin{lemma}\label{Lemma4-2}
For any $\bv= \{\bv_0, \bv_b \} \in \V_h$ and $w\in H^1(T)$ on the
element $T\in \T_h$, one has
\begin{eqnarray}
(\nabla_w\cdot \bv,\; \bbQ_hw)_T &=& -(\bv_0,\; \nabla w)_T +
\langle
(\bv_0-\bv_b)\cdot \bn,\; w-\bbQ_hw\rangle_{\partial T}\nonumber \\
& &  + \langle \bv_b\cdot\bn,\; w\rangle_{\partial
T}.\label{key1.before}
\end{eqnarray}
Moreover, if $w\in H^1(\Omega)$, then the sum of (\ref{key1.before})
over all $T\in \T_h$ gives
\begin{eqnarray}
b(\bv,\; \bbQ_hw) &=& -(\bv_0,\; \nabla_h w) +
\sum_{T\in\T_h}\langle
(\bv_0-\bv_b)\cdot \bn,\; w-\bbQ_hw\rangle_{\partial T}\nonumber \\
& &  + \langle \bv_b\cdot\bn,\; w\rangle_{\partial
\Omega},\label{key1}
\end{eqnarray}
where $\nabla_h w$ is the gradient of $w$ taken element-by-element.
\end{lemma}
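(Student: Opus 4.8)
The plan is to establish the local identity (\ref{key1.before}) by a direct computation from the definition of the discrete weak divergence, and then to obtain the global identity (\ref{key1}) by summing over all elements and exploiting the cancellation of the interior boundary contributions.

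For the local identity, since $\bbQ_h w|_T\in P_{k+1}(T)$, I would first insert $\phi=\bbQ_h w$ into the defining relation (\ref{discrete-weak-divergence-element}), obtaining $(\nabla_w\cdot\bv,\bbQ_h w)_T=-(\bv_0,\nabla(\bbQ_h w))_T+\langle\bv_b\cdot\bn,\bbQ_h w\rangle_{\partial T}$. Because $\bv_0$ is a polynomial on $T$, the classical divergence theorem applies and gives $-(\bv_0,\nabla(\bbQ_h w))_T=(\nabla\cdot\bv_0,\bbQ_h w)_T-\langle\bv_0\cdot\bn,\bbQ_h w\rangle_{\partial T}$. The one genuine observation is a degree count: $\nabla\cdot\bv_0$ is a polynomial of degree at most $k-1$, hence lies in $P_{k+1}(T)$, so the definition of $\bbQ_h$ lets me replace $\bbQ_h w$ by $w$ in the volume pairing, $(\nabla\cdot\bv_0,\bbQ_h w)_T=(\nabla\cdot\bv_0,w)_T$. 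Integrating by parts once more against the $H^1(T)$ function $w$ turns this into $-(\bv_0,\nabla w)_T+\langle\bv_0\cdot\bn,w\rangle_{\partial T}$. Collecting the boundary terms and regrouping via $\langle\bv_0\cdot\bn,w\rangle_{\partial T}+\langle(\bv_b-\bv_0)\cdot\bn,\bbQ_h w\rangle_{\partial T}=\langle\bv_b\cdot\bn,w\rangle_{\partial T}+\langle(\bv_0-\bv_b)\cdot\bn,w-\bbQ_h w\rangle_{\partial T}$ then yields exactly (\ref{key1.before}). This is precisely the chain of equalities displayed just before the lemma statement, and it requires no new ideas beyond keeping careful track of which terms are being moved.

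For the global identity I would sum (\ref{key1.before}) over $T\in\T_h$. The volume terms assemble into $-(\bv_0,\nabla_h w)$ by definition of the element-by-element gradient, and the penalty-type boundary terms simply accumulate into $\sum_{T\in\T_h}\langle(\bv_0-\bv_b)\cdot\bn,w-\bbQ_h w\rangle_{\partial T}$. The only point needing an argument is the term $\sum_{T\in\T_h}\langle\bv_b\cdot\bn,w\rangle_{\partial T}$: on any interior edge or face $e$ shared by $T_1$ and $T_2$, the quantity $\bv_b=v_b\bn_e$ is single-valued since $\bv\in\V_h$, $w$ is single-valued since $w\in H^1(\Omega)$, and the outward normals of $T_1$ and $T_2$ along $e$ are opposite, so the two contributions cancel; only the faces $e\subset\partial\Omega$ survive, producing $\langle\bv_b\cdot\bn,w\rangle_{\partial\Omega}$ and hence (\ref{key1}). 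I do not expect any real obstacle; the computation is essentially bookkeeping, and the two places where something is actually invoked are the degree count $\deg(\nabla\cdot\bv_0)\le k-1\le k+1$ permitting the removal of $\bbQ_h$ in the volume pairing, and the single-valuedness of $\bv_b\cdot\bn_e$ together with the $H^1$-continuity of $w$ for the interior cancellation in the assembly.
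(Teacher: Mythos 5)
Your proposal is correct and follows essentially the same route as the paper: insert $\phi=\bbQ_hw$ into (\ref{discrete-weak-divergence-element}), integrate by parts on $T$, use $\nabla\cdot\bv_0\in P_{k-1}(T)\subset P_{k+1}(T)$ to drop the projection in the volume pairing, integrate by parts back against $w$, and regroup the boundary terms, which is exactly the chain of equalities the paper displays before the lemma. Your explicit cancellation argument on interior faces (single-valued $\bv_b\cdot\bn_e$, continuity of $w$, opposite normals) is the intended, though unstated, justification of the summed identity (\ref{key1}).
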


\medskip

Let $T\in \T_h$ be any element, and $\bq\in [H^1(T)]^d$. Then, we
have from the definition of $Q_b$ that
\begin{eqnarray*}
\|\bq\cdot\bn - Q_b(\bq\cdot\bn)\|_{\partial T}^2 &=& \langle
\bq\cdot\bn - Q_b(\bq\cdot\bn), \ \bq\cdot\bn - Q_b(\bq\cdot\bn)
\rangle_{\partial T}\\
&=& \langle \bq\cdot\bn - Q_b(\bq\cdot\bn),\ \bq\cdot\bn -
(Q_0\bq)\cdot\bn)\rangle_{\partial T}\\
&\le&\|\bq\cdot\bn - Q_b(\bq\cdot\bn)\|_{\partial T}\ \|\bq\cdot\bn
- (Q_0\bq)\cdot\bn\|_{\partial T}.
\end{eqnarray*}
It follows that
\begin{equation}\label{fromb2zero}
\|\bq\cdot\bn - Q_b(\bq\cdot\bn)\|_{\partial T} \le \|\bq\cdot\bn -
(Q_0\bq)\cdot\bn\|_{\partial T}.
\end{equation}
\smallskip

\begin{lemma}\label{Lemma:5.3}
Let $\bq\in [H^{k+1}(\Omega)]^d$ and $s$ be any real number such that $0\le s\le
k$. Then, we have
\begin{equation}\label{L2boundary}
\sum_{T\in\T_h} h_T\|\bq\cdot\bn - Q_b(\bq\cdot\bn)\|_{\partial T}^2
\lesssim \ h^{2(s+1)} \|\bq\|_{s+1}^2,
\end{equation}
and
\begin{equation}\label{apprx}
\3bar \bq-Q_h\bq\3bar\lesssim  \rho h^{s+1}\|\bq\|_{s+1}.
\end{equation}
\end{lemma}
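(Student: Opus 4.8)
The plan is to establish (\ref{L2boundary}) first, directly from the projection bounds in Lemma \ref{L2errorbound} together with a trace inequality, and then to deduce (\ref{apprx}) from it.

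For (\ref{L2boundary}), I would start from (\ref{fromb2zero}), which already reduces the face projection error of $\bq\cdot\bn$ to that of $(Q_0\bq)\cdot\bn$; since $\bn$ is a unit vector it then suffices to bound $\sum_{T\in\T_h} h_T\|\bq-Q_0\bq\|_{\partial T}^2$. On each element $T\in\T_h$ I would apply the trace inequality $\|\phi\|_{\partial T}^2\lesssim h_T^{-1}\|\phi\|_T^2 + h_T\|\nabla\phi\|_T^2$, valid for $\phi\in[H^1(T)]^d$ on a shape-regular polygon/polyhedron (this is one of the technical results established in the Appendix, and is exactly where the shape-regularity assumptions \textbf{A1}--\textbf{A3}, in particular the pyramid condition, are used), with $\phi=\bq-Q_0\bq\in[H^1(T)]^d$. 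Multiplying by $h_T$ gives $h_T\|\bq-Q_0\bq\|_{\partial T}^2\lesssim\|\bq-Q_0\bq\|_T^2 + h_T^2\|\nabla(\bq-Q_0\bq)\|_T^2$; summing over $T$ and invoking (\ref{Jone-term}) for the first term and (\ref{Jtwo-term}) for the second — the latter contributing $h^2\cdot h^{2s}=h^{2(s+1)}$ — yields (\ref{L2boundary}).

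For (\ref{apprx}), recall that the triple-bar norm is the energy norm attached to $a_s$, i.e. $\3bar\bv\3bar^2=a(\bv,\bv)+s(\bv,\bv)$ with $a$ and $s$ as in (\ref{a}) and (\ref{c}). Writing $\bq-Q_h\bq=\{\bq-Q_0\bq,\ (\bq\cdot\bn_e-Q_b(\bq\cdot\bn_e))\bn_e\}$, I would estimate the two pieces separately. By the uniform positive-definiteness of $\alpha$ and (\ref{Jone-term}), $a(\bq-Q_h\bq,\bq-Q_h\bq)=(\alpha(\bq-Q_0\bq),\bq-Q_0\bq)\lesssim\|\bq-Q_0\bq\|^2\lesssim h^{2(s+1)}\|\bq\|_{s+1}^2$. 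For the stabilization term, on each face $e\subset\partial T$ one has $\bn_e=\pm\bn$, and a short computation then gives $\big((\bq-Q_0\bq)-(\bq\cdot\bn_e-Q_b(\bq\cdot\bn_e))\bn_e\big)\cdot\bn=Q_b(\bq\cdot\bn)-(Q_0\bq)\cdot\bn$, whose $L^2(\partial T)$-norm is at most $\|\bq\cdot\bn-Q_b(\bq\cdot\bn)\|_{\partial T}+\|\bq\cdot\bn-(Q_0\bq)\cdot\bn\|_{\partial T}\le 2\|\bq\cdot\bn-(Q_0\bq)\cdot\bn\|_{\partial T}$ by the triangle inequality and (\ref{fromb2zero}). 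Hence $s(\bq-Q_h\bq,\bq-Q_h\bq)\lesssim\rho\sum_{T\in\T_h}h_T\|\bq\cdot\bn-(Q_0\bq)\cdot\bn\|_{\partial T}^2$, which is $\lesssim\rho\,h^{2(s+1)}\|\bq\|_{s+1}^2$ by the argument of the previous paragraph. Adding the two contributions and taking square roots gives $\3bar\bq-Q_h\bq\3bar\lesssim(1+\sqrt\rho)\,h^{s+1}\|\bq\|_{s+1}$, which is (\ref{apprx}) (with $\rho$ kept, as usual, a fixed positive parameter).

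The only genuinely non-routine ingredient here is the trace inequality on general polygons/polyhedra used above; everything else is the triangle inequality, (\ref{fromb2zero}), and the already-established projection estimates (\ref{Jone-term})--(\ref{Jtwo-term}). I expect that trace inequality — whose proof really does rely on the pyramid/shape-regularity conditions \textbf{A1}--\textbf{A3} — to be the main technical obstacle, and it is presumably dispatched in the Appendix. A minor point to watch is the bookkeeping of powers of $h$, so that the gradient term $h_T^2\|\nabla(\bq-Q_0\bq)\|_T^2$ ends up of the same order $h^{2(s+1)}$ as the $L^2$ term; this works precisely because the factor $h_T^2$ compensates for the one-power loss in the gradient estimate (\ref{Jtwo-term}).
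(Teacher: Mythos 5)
Your proposal is correct and follows essentially the same route as the paper: both reduce the boundary term via (\ref{fromb2zero}) to $\|(\bq-Q_0\bq)\cdot\bn\|_{\partial T}$, apply the trace inequality (\ref{trace}) from the Appendix, and invoke (\ref{Jone-term})--(\ref{Jtwo-term}), and both then split $\3bar\bq-Q_h\bq\3bar^2$ into the $a$-part (bounded by (\ref{Jone-term})) and the stabilization part (bounded by the same boundary estimate via the triangle inequality and (\ref{fromb2zero})). The only cosmetic difference is that you simplify the integrand of $s(\bq-Q_h\bq,\bq-Q_h\bq)$ to $Q_b(\bq\cdot\bn)-(Q_0\bq)\cdot\bn$ before estimating, whereas the paper estimates the two terms of the same difference directly; your $(1+\sqrt{\rho})$ bookkeeping of the parameter is also consistent with the stated estimate.
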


\begin{proof}
Apply the trace inequality (\ref{trace}) (see Appendix) to the
right-hand side of (\ref{fromb2zero}) to obtain
\begin{equation}\label{boundary-by-element}
h_T\|\bq\cdot\bn - Q_b(\bq\cdot\bn)\|_{\partial T}^2 \lesssim \|\bq
- Q_0\bq\|_T^2 + h_T^2\|\nabla(\bq-Q_0\bq)\|_{T}^2.
\end{equation}
Thus, it follows from Lemma \ref{L2errorbound} that the estimate
(\ref{L2boundary}) holds true.

To derive (\ref{apprx}), we have from the definition of the bilinear
form $s(\cdot,\cdot)$ in (\ref{c}), the definition of $Q_b$, and the
inequality (\ref{fromb2zero}) that
\begin{eqnarray*}
s(\bq-Q_h\bq,\bq-Q_h\bq)&&=\rho\sum_{T\in\T_h}h_T\|(\bq-Q_0\bq)\cdot\bn-(\bq\cdot\bn-Q_b(\bq\cdot\bn))\|_{\partial T}^2\\
&&\lesssim \rho\sum_{T\in\T_h}h_T\left(\|(\bq-Q_0\bq)\cdot\bn\|_{\partial T}^2+\|\bq\cdot\bn-Q_b(\bq\cdot\bn)\|_{\partial T}^2\right)\\
&&\lesssim\rho\sum_{T\in\T_h}h_T \|(\bq-Q_0\bq)\cdot\bn\|_{\partial
T}^2.
\end{eqnarray*}
Now apply the trace inequality (\ref{trace}) to the last term of the
above inequality to obtain
\begin{eqnarray}\label{feb-5.18}
s(\bq-Q_h\bq,\bq-Q_h\bq) \lesssim \ \rho\sum_{T\in\T_h}\left(
\|\bq-Q_0\bq\|_{T}^2+h_T^2\|\nabla(\bq-Q_0\bq)\|^2_T\right).
\end{eqnarray}
Finally, it follows from the triple-bar norm definition
(\ref{3barnorm}) and the above inequality that
\begin{eqnarray*}
\3bar \bq-Q_h\bq\3bar^2 &&=a(\bq-Q_h\bq, \bq-Q_h\bq)+s( \bq-Q_h\bq, \bq-Q_h\bq)\\
&&=\|\alpha^{\frac12}(\bq-Q_0\bq)\|^2+ s( \bq-Q_h\bq, \bq-Q_h\bq),\\
\end{eqnarray*}
which, combined with Lemma \ref{L2errorbound}, gives rise to the
desired estimate (\ref{apprx}). This completes the proof.
\end{proof}

\section{Error Analysis}\label{section-erroranalysis}
The goal of this section is to derive some optimal order error
estimates for the WG-MFEM approximation $(\bq_h;u_h)$ obtained from
(\ref{wg1})-(\ref{wg2}). In finite element analysis, it is routine
to decompose the error into two components in which the first is the
difference between the exact solution and a properly defined
projection of the exact solution while the second is the difference
of the finite element solution with the same projection. In the
current application, we shall employ the following decomposition:
\begin{eqnarray*}
\bq-\bq_h &=& (\bq - Q_h\bq) + (Q_h\bq - \bq_h),\\
u-u_h &=&(u-\bbQ_{h}u) + (\bbQ_{h}u-u_h).
\end{eqnarray*}
For simplicity, we introduce the following notation:
\[
\be_h=\{\be_0,\be_b\}:=\bq_h- Q_h\bq,\qquad \epsilon_h:=u_h-\bbQ_hu.
\]
Moreover, we shall denote $\sum_{T\in {\cal T}_h}\|\nabla w\|_T^2$
by $\|\nabla_h w\|^2$ for any $w\in \prod_{T\in \T_h} H^1(T)$.

\subsection{Error equations} The goal here is to derive equations
that $\be_h$ and $\epsilon_h$ must satisfy. To this end, for any
finite element function $\bv=\{\bv_0,\bv_b\}\in \V_h$, we test the
first equation of (\ref{mix}) against $\bv_0$ to obtain
\[
(\alpha\bq,\bv_0)+(\nabla u,\bv_0)=0.
\]
Now applying the identity (\ref{key1}) to the term $(\nabla u,
\bv_0)$ yields
\[
a(\bq,\bv)-b(\bv,\bbQ_hu)= \sum_{T\in {\cal T}_h}\langle
(\bv_0-\bv_b)\cdot\bn,\bbQ_hu- u\rangle_{\partial T} + \langle
g,\bv_b\cdot\bn \rangle_{\partial \Omega},
\]
where the Dirichlet boundary condition (\ref{bc1}) was also used.
Recall that $a_s(\cdot,\cdot)=a(\cdot,\cdot)+s(\cdot,\cdot)$. Adding
and subtracting $a_s(Q_h\bq,\bv)$ in the equation above gives
\begin{eqnarray}
a_s(Q_h\bq,\bv) -
b(\bv,\bbQ_hu)&=&a(Q_h\bq-\bq,\;\bv)+s(Q_h\bq,\bv)+ \langle
g,\bv_b\cdot\bn \rangle_{\partial
\Omega}\nonumber\\
&&+\sum_{T\in {\cal T}_h}\langle (\bv_0-\bv_b)\cdot\bn,\bbQ_hu-
u\rangle_{\partial T}.\label{eq3-old}
\end{eqnarray}
It follows from the definition of the bilinear form $s(\cdot,\cdot)$
that for any $\bw\in [H^1(\Omega)]^d$, one has
\begin{equation}\label{c-p1}
s(\bw,\bv)=0,\quad\forall\bv\in \V.
\end{equation}
Thus, we have $s(Q_h\bq,\bv)=s(Q_h\bq-\bq,\bv)$ and
$$
a(Q_h\bq-\bq,\;\bv)+s(Q_h\bq,\bv) = a_s(Q_h\bq-\bq,\bv).
$$
Substituting the above into (\ref{eq3-old}) yields
\begin{eqnarray}
a_s(Q_h\bq,\bv) - b(\bv,\bbQ_hu)&=&a_s(Q_h\bq-\bq,\;\bv)+ \langle
g,\bv_b\cdot\bn \rangle_{\partial
\Omega}\nonumber\\
&&+\sum_{T\in {\cal T}_h}\langle (\bv_0-\bv_b)\cdot\bn,\bbQ_hu-
u\rangle_{\partial T}.\label{eq3}
\end{eqnarray}

Next, we test the second equation of (\ref{mix}) against any $w\in
\W_h$ to obtain
\[
(\nabla\cdot\bq,w)=(f,w).
\]
Substituting (\ref{key}) into the above equation yields
\begin{equation}\label{eq4}
b(Q_h\bq,\;w)=(f,\;w)-\sum_{T\in {\cal
T}_h}\langle\bq\cdot\bn-Q_b(\bq\cdot\bn),\;w\rangle_{\partial T}.
\end{equation}
Now, subtracting (\ref{eq3}) from (\ref{wg1}) gives
\begin{eqnarray}
a_s(\be_h,\bv)-b(\bv,\epsilon_h) =a_s(\bq-Q_h\bq,\;\bv)-\sum_{T\in
{\cal T}_h}\langle (\bv_0-\bv_b)\cdot\bn,\;\bbQ_hu-
u\rangle_{\partial T},\label{err1}
\end{eqnarray}
for all $v\in \V_h$. Similarly, subtracting (\ref{eq4}) from
(\ref{wg2}) yields
\begin{equation}\label{err2}
b(\be_h,w)=\sum_{T\in {\cal
T}_h}\langle\bq\cdot\bn-Q_b(\bq\cdot\bn),\;w\rangle_{\partial T},
\end{equation}
for all $w\in \W_h$.

The equations (\ref{err1}) and (\ref{err2}) constitute governing
rules for the error term $\be_h$ and $\epsilon_h$. These are called
error equations.

\subsection{ Boundedness and {\em inf-sup} condition}
For any interior edge or flat face $e\in {\cal E}_h$, let $T_1$ and
$T_2$ be two elements sharing $e$ in common. The jump of $w\in \W_h$
on $e$ is given by
\[
\ljump w\rjump_e= \left\{
\begin{array}{ll}
w|_{_{\partial T_1}}-w|_{_{\partial T_2}},\; & e\in {\cal
E}_h^0,\\
w,\;  & e\in
\partial\Omega.
\end{array}
\right.
\]
We introduce a norm in $\V$ as follows
\begin{equation}\label{3barnorm}
\3bar \bv\3bar^2:=a(\bv,\bv)+s(\bv,\bv),\qquad \bv\in \V.
\end{equation}
Furthermore, let $\|\cdot\|_{1,h}$ be a norm in the space
$\W_h+H^1(\Omega)$ defined as follows
\begin{equation}\label{discreteH1norm}
\|w\|_{1,h}^2:=\sum_{T\in {\cal T}_h}\|\nabla w\|_T^2+\sum_{e\in
{\cal E}_h}h_e^{-1}\|Q_b\ljump w\rjump\|_e^2.
\end{equation}
It is not hard to see that $\|\cdot \|_{1,h}$ defines a norm in
$\W_h$ since $\|w\|_{1,h}=0$ would lead to $w=const$ on each element
$T$ and $\ljump w\rjump=Q_b\ljump w\rjump=0$ on each edge or flat
face $e\in {\cal E}_h$. It follows that $w=0$ on each element $T$.
As to $\3bar \cdot \3bar$, assume that $\3bar\bv \3bar=0$ for some
$\bv\in \V_h$; i.e.,
$$
(\alpha\bv_0, \bv_0) + \rho\sum_{T\in {\cal
T}_h}h_T\langle(\bv_0-\bv_b)\cdot\bn,\;(\bv_0-\bv_b)\cdot\bn
\rangle_{\partial T}=0.
$$
This implies that $\bv_0=0$ on each element $T$ and
$(\bv_0-\bv_b)\cdot\bn=0$ on each edge or flat face $e\in {\cal
E}_h$. Thus, we have $\bv_b\cdot\bn=0$ on each $e\in {\cal E}_h$.
Recall that $\bv_b$ is a vector parallel to $\bn$ on each $e\in
{\cal E}_h$. Hence, we must have $\bv_b=0$ on each $e\in {\cal
E}_h$. The other properties for a norm can be easily verified for
$\|\cdot\|_{1,h}$ and $\3bar\cdot\3bar$.

It is not hard to see that the triple-bar norm is essentially a
discrete $L^2$ norm, and the norm $\|\cdot\|_{1,h}$ is an
$H^1$-equivalent norm in the corresponding space.

\begin{lemma}\label{lemma5} For any $\varphi\in \W_h$, there exists
at least one $\bv\in \V_h$ such that
\begin{eqnarray}
|b(\bv,\varphi)|&=&\|\varphi\|_{1,h}^2,\label{b-p1}\\
\3bar\bv\3bar &\lesssim&\|\varphi\|_{1,h}.\label{b-p2}
\end{eqnarray}
Therefore, the following {\em inf-sup} condition is satisfied
\begin{equation}\label{inf-sup}
\sup_{\bv\in \V_h, \ \3bar\bv\3bar \neq 0}{|b(\bv, \varphi)|\over
\3bar \bv\3bar} \ge \beta \|\varphi\|_{1,h}
\end{equation}
for some positive constant $\beta$ independent of the meshsize $h$.
\end{lemma}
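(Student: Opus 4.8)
The plan is to prove the stronger pair of statements \eqref{b-p1}--\eqref{b-p2}, from which the \emph{inf-sup} condition \eqref{inf-sup} follows immediately by taking $\beta$ proportional to the constant in \eqref{b-p2}: indeed, if $\bv$ realizes \eqref{b-p1}--\eqref{b-p2}, then $|b(\bv,\varphi)|/\3bar\bv\3bar \ge \|\varphi\|_{1,h}^2/(C\|\varphi\|_{1,h}) = C^{-1}\|\varphi\|_{1,h}$. So the whole task reduces to explicitly constructing, for a given $\varphi\in\W_h$, a test function $\bv=\{\bv_0,\bv_b\}\in\V_h$ that ``sees'' both pieces of $\|\varphi\|_{1,h}^2 = \sum_T\|\nabla\varphi\|_T^2 + \sum_e h_e^{-1}\|Q_b\ljump\varphi\rjump\|_e^2$.

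The natural choice is to design $\bv$ so that the identity in Lemma~\ref{Lemma4-2} (specifically \eqref{key1}, applied with $w=\varphi$, which is legitimate element-by-element since $\varphi\in P_{k+1}(T)\subset H^1(T)$) collapses to exactly $\|\varphi\|_{1,h}^2$. Reading \eqref{key1} with the sign flipped, $-b(\bv,\bbQ_h\varphi) = (\bv_0,\nabla_h\varphi) - \sum_T\langle(\bv_0-\bv_b)\cdot\bn,\varphi-\bbQ_h\varphi\rangle_{\partial T} - \langle\bv_b\cdot\bn,\varphi\rangle_{\partial\Omega}$; but since $\varphi\in\W_h$ we have $\bbQ_h\varphi=\varphi$, so this simplifies to $b(\bv,\varphi) = -(\bv_0,\nabla_h\varphi) + \langle\bv_b\cdot\bn,\varphi\rangle_{\partial\Omega}$. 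Actually it is cleaner to go back to the raw definition \eqref{tg.01}: $b(\bv,\varphi) = \sum_T\big(\langle\bv_b\cdot\bn,\varphi\rangle_{\partial T} - (\bv_0,\nabla\varphi)_T\big) = -\,(\bv_0,\nabla_h\varphi) + \sum_{e\in\E_h}\langle\bv_b\cdot\bn_e,\ljump\varphi\rjump\rangle_e$, after regrouping the boundary sum by edges. Now I choose the two components independently: set $\bv_0|_T := -\,\nabla\varphi|_T$ on each element (well-defined, and in $[P_k(T)]^d$ since $\varphi\in P_{k+1}(T)$), and on each $e\in\E_h$ set $\bv_b|_e := h_e^{-1}\big(Q_b\ljump\varphi\rjump\big)\,\bn_e$ (a legal element of $P_k(e)$). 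Then the first term contributes $\sum_T\|\nabla\varphi\|_T^2$, and the second, using that $\langle\bv_b\cdot\bn_e,\ljump\varphi\rjump\rangle_e = h_e^{-1}\langle Q_b\ljump\varphi\rjump,\ljump\varphi\rjump\rangle_e = h_e^{-1}\|Q_b\ljump\varphi\rjump\|_e^2$ by self-adjointness and idempotency of $Q_b$, contributes $\sum_e h_e^{-1}\|Q_b\ljump\varphi\rjump\|_e^2$. Summing gives $b(\bv,\varphi) = \|\varphi\|_{1,h}^2 \ge 0$, which is \eqref{b-p1}.

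For the stability bound \eqref{b-p2}, expand $\3bar\bv\3bar^2 = (\alpha\bv_0,\bv_0) + \rho\sum_T h_T\|(\bv_0-\bv_b)\cdot\bn\|_{\partial T}^2$. The first term is $\lesssim\|\bv_0\|^2 = \|\nabla_h\varphi\|^2 \le \|\varphi\|_{1,h}^2$ by uniform boundedness of $\alpha$. For the stabilization term, bound $\|(\bv_0-\bv_b)\cdot\bn\|_{\partial T}^2 \lesssim \|\bv_0\cdot\bn\|_{\partial T}^2 + \|\bv_b\cdot\bn\|_{\partial T}^2$; the $\bv_b$ part is $\sum_{e\subset\partial T} h_e^{-2}\|Q_b\ljump\varphi\rjump\|_e^2$, which after multiplying by $h_T$ and using assumption \textbf{A2} ($h_T\lesssim h_e^{-1}\cdot\,$... rather $\kappa h_T\le h_e$, so $h_T h_e^{-2}\lesssim h_e^{-1}$) is controlled by $\sum_e h_e^{-1}\|Q_b\ljump\varphi\rjump\|_e^2 \le \|\varphi\|_{1,h}^2$; the $\bv_0$ part needs a trace inequality: $h_T\|\bv_0\cdot\bn\|_{\partial T}^2 \lesssim \|\bv_0\|_T^2 + h_T^2\|\nabla\bv_0\|_T^2 \lesssim \|\bv_0\|_T^2$ by the inverse inequality (valid on the shape-regular element via assumptions \textbf{A1--A4} and the trace inequality \eqref{trace} in the Appendix), and this sums to $\|\nabla_h\varphi\|^2 \le \|\varphi\|_{1,h}^2$. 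Collecting, $\3bar\bv\3bar^2 \lesssim \|\varphi\|_{1,h}^2$, which is \eqref{b-p2}. The main obstacle is purely bookkeeping on the boundary terms: one must be careful that the trace and inverse inequalities on arbitrary polygons/polyhedra genuinely hold with mesh-independent constants — this is exactly what the shape-regularity package \textbf{A1--A4} and the Appendix trace inequality \eqref{trace} are there to supply, so the argument is safe but the constants should be tracked through \textbf{A2} to make the $h_T$ versus $h_e$ scaling come out right.
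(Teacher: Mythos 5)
Your construction is exactly the paper's: take $\bv_0=-\nabla\varphi$ on each element and $\bv_b=h_e^{-1}Q_b(\ljump\varphi\rjump)\bn_e$ on each edge, which makes $b(\bv,\varphi)=\|\varphi\|_{1,h}^2$, and then bound $\3bar\bv\3bar$ by $\|\varphi\|_{1,h}$. Your treatment of \eqref{b-p2} merely spells out (via the trace inequality \eqref{trace}, the inverse inequality, and assumption \textbf{A2}) the estimate the paper asserts in one line, so the proposal is correct and follows essentially the same route.
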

\begin{proof}
It follows from the definition of $b(\cdot,\cdot)$ and the discrete
divergence that
\begin{eqnarray*}
b(\bv,\varphi)&=&(\nabla_w\cdot\bv,\;\varphi)\\
&=&\sum_{T\in {\cal T}_h}\left( \langle \bv_b\cdot\bn,
\varphi\rangle_{\partial T}-(\bv_0,\;\nabla\varphi)_T\right)\\
&=& \sum_{e\in {\cal E}_h}\langle \bv_b\cdot\bn_e,
\ljump\varphi\rjump\rangle_e -\sum_{T\in {\cal
T}_h}(\bv_0,\;\nabla\varphi)_T,
\end{eqnarray*}
provided that the jump $\ljump\varphi\rjump$ was taken consistently
with the direction set ${\cal D}_h$. Now by taking $\bv_0=
-\nabla\varphi$ on each element $T$ and $\bv_b =
h_e^{-1}Q_b(\ljump\varphi\rjump) \bn_e$ on each edge or flat face
$e$ we arrive at
\begin{eqnarray*}
b(\bv,\varphi)&=&\sum_{T\in {\cal
T}_h}(\nabla\varphi,\;\nabla\varphi)_T+\sum_{e\in {\cal
E}_h}h_e^{-1}\langle Q_b\ljump\varphi\rjump,
Q_b\ljump\varphi\rjump\rangle_e=\|\varphi\|_{1,h}^2,
\end{eqnarray*}
which verifies (\ref{b-p1}). To verify (\ref{b-p2}), we apply the
definition of the triple-bar norm to the above chosen $\bv$ to
obtain
\begin{eqnarray*}
\3bar\bv\3bar^2&&=(\alpha\nabla_h\varphi,\nabla_h\varphi)+
\rho\sum_{T\in {\cal T}_h}h_T\|\nabla\varphi\cdot\bn+h_e^{-1}Q_b(\ljump\varphi\rjump) \bn_e\cdot\bn\|_{\partial T}^2\\
&&\lesssim \|\nabla_h\varphi\|^2+\sum_{e\in {\cal
E}_h}h_e^{-1}\|Q_b\ljump\varphi\rjump\|_e^2=\|\varphi\|_{1,h}^2,
\end{eqnarray*}
which is what (\ref{b-p2}) states for.
\end{proof}

\begin{lemma}\label{lemma5-1} The bilinear form $b(\cdot,\cdot)$ is bounded in $\V_h\times \W_h$. In other words, there
exists a constant $C$ such that
\begin{equation}\label{b-boundedness}
|b(\bv,\varphi)|\leq C \3bar\bv\3bar \ \|\varphi\|_{1,h}.
\end{equation}
\end{lemma}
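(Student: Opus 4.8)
The plan is to expand $b(\bv,\varphi)$ with the identity already derived in the proof of Lemma \ref{lemma5},
\[
b(\bv,\varphi)=\sum_{e\in\E_h}\langle\bv_b\cdot\bn_e,\ljump\varphi\rjump\rangle_e-\sum_{T\in\T_h}(\bv_0,\nabla\varphi)_T ,
\]
and to bound the two sums separately against $\3bar\bv\3bar\,\|\varphi\|_{1,h}$. The volume sum is the easy one: by Cauchy--Schwarz $\big|\sum_{T}(\bv_0,\nabla\varphi)_T\big|\le\|\bv_0\|\,\|\nabla_h\varphi\|$, and since $\alpha$ is uniformly positive definite $\|\bv_0\|^2\lesssim(\alpha\bv_0,\bv_0)=a(\bv,\bv)\le\3bar\bv\3bar^2$, while $\|\nabla_h\varphi\|\le\|\varphi\|_{1,h}$ directly from (\ref{discreteH1norm}).

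For the edge sum I would first use $\bv_b\cdot\bn_e\in P_k(e)$ to insert the projection, $\langle\bv_b\cdot\bn_e,\ljump\varphi\rjump\rangle_e=\langle\bv_b\cdot\bn_e,Q_b\ljump\varphi\rjump\rangle_e$, and then apply a weighted Cauchy--Schwarz inequality
\[
\Big|\sum_{e\in\E_h}\langle\bv_b\cdot\bn_e,Q_b\ljump\varphi\rjump\rangle_e\Big|
\le\Big(\sum_{e\in\E_h}h_e\|\bv_b\cdot\bn_e\|_e^2\Big)^{1/2}\Big(\sum_{e\in\E_h}h_e^{-1}\|Q_b\ljump\varphi\rjump\|_e^2\Big)^{1/2},
\]
whose second factor is at most $\|\varphi\|_{1,h}$ by (\ref{discreteH1norm}). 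This reduces the whole lemma to the estimate $\sum_{e\in\E_h}h_e\|\bv_b\cdot\bn_e\|_e^2\lesssim\3bar\bv\3bar^2$. To prove it, on each edge $e$ I fix one element $T=T(e)$ with $e\subset\partial T$ and split $\bv_b\cdot\bn=(\bv_b-\bv_0)\cdot\bn+\bv_0\cdot\bn$ on $e$ with $\bv_0=\bv_0|_{T}$; the triangle inequality, $h_e\le h_T$, and the identity $\sum_{e\subset\partial T}\|\cdot\|_e^2=\|\cdot\|_{\partial T}^2$ bound the sum by $\sum_{T}h_T\|(\bv_0-\bv_b)\cdot\bn\|_{\partial T}^2+\sum_{T}h_T\|\bv_0\|_{\partial T}^2$. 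The first piece equals $\rho^{-1}s(\bv,\bv)\le\rho^{-1}\3bar\bv\3bar^2$ by (\ref{c}). For the second I apply the trace inequality (\ref{trace}) to the polynomial $\bv_0$ on $T$ followed by the inverse inequality $h_T\|\nabla\bv_0\|_T\lesssim\|\bv_0\|_T$, giving $h_T\|\bv_0\|_{\partial T}^2\lesssim\|\bv_0\|_T^2$, hence $\sum_{T}h_T\|\bv_0\|_{\partial T}^2\lesssim\|\bv_0\|^2\lesssim\3bar\bv\3bar^2$. Combining the two pieces yields (\ref{b-boundedness}) with $C$ depending only on $\alpha$, $\rho$, and the shape-regularity parameters.

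The step I expect to be the main obstacle is the inverse/trace bound $h_T\|\bv_0\|_{\partial T}^2\lesssim\|\bv_0\|_T^2$ for polynomials on an arbitrary polygon or polyhedron $T$: unlike on simplices, this is not a single reference-element scaling argument and must go through the geometric hypotheses, namely by passing to the circumscribed shape-regular simplex $S(T)$ of assumption {\bf A4}, using the standard inverse inequality there, and controlling $\|\bv_0\|_{S(T)}$ by $\|\bv_0\|_T$ through the polynomial norm equivalence of Lemma \ref{appendix.thm} on the ball $B\subset T$ supplied by assumption {\bf A3} (exactly as done in the proof of Lemma \ref{L2errorbound}). Everything else is Cauchy--Schwarz plus routine bookkeeping over edges and elements.
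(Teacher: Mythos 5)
Your proposal is correct and follows essentially the same route as the paper: expand $b(\bv,\varphi)$ via the weak divergence, apply weighted Cauchy--Schwarz, and reduce the edge term to $\sum_e h_e\|\bv_b\cdot\bn_e\|_e^2\lesssim\3bar\bv\3bar^2$ by splitting $\bv_b\cdot\bn=(\bv_b-\bv_0)\cdot\bn+\bv_0\cdot\bn$ and invoking the trace inequality (\ref{trace}) and the polygonal inverse inequality (\ref{aaa.88-new}). Your explicit insertion of $Q_b$ on the jump (legitimate since $\bv_b\cdot\bn_e\in P_k(e)$) is in fact a small refinement the paper glosses over, because $\|\varphi\|_{1,h}$ is defined through $\|Q_b\ljump\varphi\rjump\|_e$ rather than $\|\ljump\varphi\rjump\|_e$.
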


\begin{proof}
We note from the definition of weak divergence that
\begin{eqnarray*}
b(\bv, \varphi) &=& (\nabla_w\cdot\bv, \varphi) \\
&=& \sum_{T\in\T_h}\left(\langle\bv_b\cdot\bn, \varphi
\rangle_{\partial T}-(\bv_0, \nabla\varphi)_T \right)\\
&=&\sum_{e\in{\cal E}_h} \langle \bv_b\cdot\bn_e,\ljump
\varphi\rjump \rangle_e - \sum_{T\in\T_h}(\bv_0, \nabla\varphi)_T.
\end{eqnarray*}
Using Cauchy-Schwarz we obtain
\begin{eqnarray}
|b(\bv, \varphi)|&\leq& \sum_{e\in{\cal E}_h} \|\bv_b\cdot\bn_e\|_e
\ \|\ljump \varphi\rjump \|_e + \sum_{T\in\T_h} \|\bv_0\|_T \
\|\nabla\varphi\|_T\nonumber\\
&\leq& \left(\sum_{e\in{\cal E}_h} h_e\|\bv_b\cdot\bn_e\|_e^2
\right)^{\frac12} \left( \sum_{e\in{\cal E}_h} h_e^{-1} \|\ljump
\varphi\rjump \|_e^2\right)^{\frac12} + \|\bv_0\|\
\|\nabla_h\varphi\|\nonumber\\
&\leq& \left( \left(\sum_{e\in{\cal E}_h} h_e\|\bv_b\cdot\bn_e\|_e^2
\right)^{\frac12} +  \|\bv_0\| \right) \
\|\varphi\|_{1,h}.\label{tg.800}
\end{eqnarray}
Let $T$ be an element that takes $e$ as an edge or flat face. Then,
using the trace inequality (\ref{trace}) and the inverse inequality
(\ref{aaa.88-new}) we obtain
\begin{eqnarray*}
h_e \|\bv_b\cdot\bn_e\|_e^2 &\leq& 2
h_e\|(\bv_b-\bv_0)\cdot\bn_e\|_e^2 +
2h_e\|\bv_0\cdot\bn_e\|_e^2 \\
&\lesssim&h_e\|(\bv_b-\bv_0)\cdot\bn_e\|_e^2 + \|\bv_0\|_T^2.
\end{eqnarray*}
Substituting the above inequality into (\ref{tg.800}) yields
$$
|b(\bv, \varphi)| \lesssim \3bar\bv\3bar \ \|\varphi\|_{1,h},
$$
which completes the proof of the lemma.
\end{proof}

\medskip

\subsection{Error estimates} We first establish some estimates useful in the
forthcoming error estimates.

\begin{lemma}\label{lemma4} Let $u\in H^{k+2}(\Omega)$ and $\bq\in
[H^{k+1}(\Omega)]^d$ be two smooth functions on $\Omega$. Then, the
following estimates hold true
\begin{eqnarray}
\sum_{T\in {\cal T}_h}\left|\langle (\bv_0-\bv_b)\cdot\bn,\;\bbQ_hu-
u\rangle_{\partial T}\right|
\lesssim h^{s+1}\ \|u\|_{s+2}\3bar\bv\3bar,\;\; 0\le s\le k,\label{b1}\\
\sum_{T\in {\cal T}_h}\left|\langle\bq\cdot\bn
-Q_b(\bq\cdot\bn),\varphi\rangle_{\partial T}\right| \lesssim
h^{s+1}\ \|\bq\|_{s+1}\|\nabla_h\varphi\|,\;\;0\le s\le k,\label{b2}
\end{eqnarray}
for all $\bv\in \V$ and $\varphi\in \prod_{T\in\T_h} H^1(T)$.
\end{lemma}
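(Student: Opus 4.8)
The plan is to prove the two estimates (\ref{b1}) and (\ref{b2}) separately, in each case by a double application of the Cauchy--Schwarz inequality --- first on each element boundary $\partial T$, then over $T\in\T_h$ --- with the factor carrying $\bv$ (resp.\ $\bq$) absorbed into the stabilization term $s(\cdot,\cdot)$ (resp.\ into the boundary estimate (\ref{L2boundary}) of Lemma~\ref{Lemma:5.3}), and the factor carrying $u$ (resp.\ $\varphi$) handled by the trace inequality (\ref{trace}) together with the approximation estimates of Lemma~\ref{L2errorbound} (resp.\ a Poincar\'e inequality on the element). The only non-routine ingredient is a normalization step needed for (\ref{b2}), since there $\varphi$ is merely piecewise $H^1$.

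For (\ref{b1}), I would split, on each $\partial T$,
\[
\left|\langle(\bv_0-\bv_b)\cdot\bn,\;\bbQ_hu-u\rangle_{\partial T}\right|
\le\Big(h_T^{\frac12}\|(\bv_0-\bv_b)\cdot\bn\|_{\partial T}\Big)\Big(h_T^{-\frac12}\|\bbQ_hu-u\|_{\partial T}\Big),
\]
sum over $T\in\T_h$, and apply Cauchy--Schwarz to the sum. The first factor becomes $\big(\sum_{T\in\T_h}h_T\|(\bv_0-\bv_b)\cdot\bn\|_{\partial T}^2\big)^{1/2}=\rho^{-1/2}s(\bv,\bv)^{1/2}\le\rho^{-1/2}\3bar\bv\3bar$ by (\ref{c}) and (\ref{3barnorm}), contributing only a harmless factor that depends on the fixed parameter $\rho$ but not on $h$. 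For the second factor I would use the trace inequality (\ref{trace}) in the form $h_T^{-1}\|\bbQ_hu-u\|_{\partial T}^2\lesssim h_T^{-2}\big(\|\bbQ_hu-u\|_T^2+h_T^2\|\nabla(\bbQ_hu-u)\|_T^2\big)$, bound the bracket element-wise by $h_T^{2(s+2)}\|u\|_{s+2,S(T)}^2$ using the local estimates from the proof of Lemma~\ref{L2errorbound} applied to $\bbQ_h$ (the analog of (\ref{aaa-new.99}) and its gradient version, valid for $0\le s\le k$), and then sum using the finite-overlap property of the circumscribed simplices $\{S(T)\}$ from assumption {\bf A4}. This gives $\sum_{T\in\T_h}h_T^{-1}\|\bbQ_hu-u\|_{\partial T}^2\lesssim h^{2(s+1)}\|u\|_{s+2}^2$, and multiplying the two factors yields (\ref{b1}).

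For (\ref{b2}), jump cancellation of the sort used in Lemmas~\ref{lemma5} and~\ref{lemma5-1} is unavailable because $\varphi$ need not be continuous across edges/faces. Instead I would use that, by definition of $Q_b$, the quantity $\bq\cdot\bn-Q_b(\bq\cdot\bn)$ is $L^2(e)$-orthogonal to $P_k(e)$ on every edge/face $e\subset\partial T$ --- in particular orthogonal to the constants on all of $\partial T$ --- so that, with $\bar\varphi_T$ the mean value of $\varphi$ over $T$,
\[
\langle\bq\cdot\bn-Q_b(\bq\cdot\bn),\;\varphi\rangle_{\partial T}=\langle\bq\cdot\bn-Q_b(\bq\cdot\bn),\;\varphi-\bar\varphi_T\rangle_{\partial T}.
\]
Cauchy--Schwarz on $\partial T$ bounds this by $h_T^{1/2}\|\bq\cdot\bn-Q_b(\bq\cdot\bn)\|_{\partial T}\cdot h_T^{-1/2}\|\varphi-\bar\varphi_T\|_{\partial T}$, and the trace inequality (\ref{trace}) combined with the Poincar\'e inequality $\|\varphi-\bar\varphi_T\|_T\lesssim h_T\|\nabla\varphi\|_T$ --- valid on each element with a constant depending only on the shape-regularity parameters in {\bf A1--A4} --- gives $h_T^{-1/2}\|\varphi-\bar\varphi_T\|_{\partial T}\lesssim\|\nabla\varphi\|_T$. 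Summing over $T$ and one more Cauchy--Schwarz then produce
\[
\sum_{T\in\T_h}\left|\langle\bq\cdot\bn-Q_b(\bq\cdot\bn),\varphi\rangle_{\partial T}\right|\lesssim\Big(\sum_{T\in\T_h}h_T\|\bq\cdot\bn-Q_b(\bq\cdot\bn)\|_{\partial T}^2\Big)^{1/2}\|\nabla_h\varphi\|,
\]
and (\ref{L2boundary}) of Lemma~\ref{Lemma:5.3} bounds the first factor by $h^{s+1}\|\bq\|_{s+1}$, giving (\ref{b2}).

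I expect the real work to be not in the algebra above but in the element-level facts it relies on over arbitrary polygons/polyhedra: the local approximation bound $\|\bbQ_hu-u\|_T+h_T\|\nabla(\bbQ_hu-u)\|_T\lesssim h_T^{s+2}\|u\|_{s+2,S(T)}$ and the Poincar\'e inequality for a general $\varphi\in H^1(T)$. Both hinge on the circumscribed-simplex condition {\bf A4} and the pyramid/inscribed-ball condition {\bf A3}; together with the trace inequality (\ref{trace}) they are exactly the technical results collected in the appendix for this class of elements, and once those are granted the argument above goes through with only routine computation.
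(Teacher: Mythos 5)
Your proof is correct and follows essentially the same route as the paper: for (\ref{b1}), elementwise Cauchy--Schwarz with the $h_T^{1/2}$/$h_T^{-1/2}$ split, absorbing the first factor into $s(\bv,\bv)\le\3bar\bv\3bar^2$ and handling the second via the trace inequality (\ref{trace}) and the approximation estimate (\ref{Jthree-term}); for (\ref{b2}), subtracting the elementwise mean $\bar\varphi_T$ using the orthogonality built into $Q_b$, then Cauchy--Schwarz, (\ref{L2boundary}), the trace inequality, and an element Poincar\'e inequality. The only difference is that you make explicit the Poincar\'e step and the local form of the projection estimates, which the paper uses implicitly.
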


\begin{proof}
It follows from the Cauchy-Schwarz inequality, the definition of
$\3bar\cdot\3bar$, and the trace inequality (\ref{trace}) that
\begin{eqnarray*}
\sum_{T\in {\cal T}_h}\left|\langle (\bv_0-\bv_b)\cdot\bn,\;\bbQ_hu-
u\rangle_{\partial T}\right| &&\lesssim \left(\sum_{T\in {\cal
T}_h}h_T\|(\bv_0-\bv_b)\cdot\bn\|_{\partial T}^2\right)^\frac12 \\
&& \quad \cdot \left(\sum_{T\in {\cal T}_h}h_T^{-1}\|\bbQ_hu-u\|_{\partial T}^2\right)^\frac12\\
&&\lesssim \3bar\bv\3bar\left(\sum_{T\in {\cal T}_h} \left(h_T^{-2}\|\bbQ_hu-u\|_T^2+\|\nabla (\bbQ_hu-u)\|^2_T\right)\right)^{\frac12}\\
&&\lesssim h^{s+1}\3bar\bv\3bar\ \|u\|_{s+2},
\end{eqnarray*}
where we have used the estimate (\ref{Jthree-term}). This verifies
the validity of (\ref{b1}).

Next, let $\bar{\varphi}$ be the average of $\varphi$ over each
element $T$. It follows from the definition of $Q_b$, the estimates
(\ref{L2boundary}), and (\ref{trace}) that
\begin{eqnarray*}
&&\sum_{T\in {\cal
T}_h}\left|\langle\bq\cdot\bn-Q_b(\bq\cdot\bn),\varphi\rangle_{\partial
T}\right| =\sum_{T\in {\cal
T}_h}\left|\langle\bq\cdot\bn-Q_b(\bq\cdot\bn),\varphi-\bar{\varphi}
\rangle_{\partial T}\right|\\
&&\le \left(\sum_{T\in\T_h}
h_T\|\bq\cdot\bn-Q_b(\bq\cdot\bn)\|_{\partial T}^2\right)^{\frac12}
\left(\sum_{T\in {\cal T}_h}h_T^{-1}\|\varphi-\bar{\varphi}\|^2_{\partial T}\right)^{\frac12}\\
&&\lesssim h^{s+1}\|\bq\|_{s+1}\left(\sum_{T\in {\cal
T}_h}\left(h_T^{-2}\|\varphi-\bar{\varphi}\|^2_T+
\|\nabla \varphi\|^2_T\right)\right)^{\frac12}\\
&&\lesssim h^{s+1}\|\bq\|_{s+1}\|\nabla_h\varphi\|,
\end{eqnarray*}
which completes the proof of (\ref{b2}).
\end{proof}

\smallskip

With the help of Lemma \ref{lemma4}, Lemma \ref{lemma5}, and Lemma
\ref{lemma5-1}, we are now in a position to derive an error estimate
for the WG-MFEM solution $(\bq_h;u_h)$ in the norm $\3bar\cdot\3bar$
and $\|\cdot\|_{1,h}$ respectively.

\begin{theorem}\label{thm1}
Let $\bq_h\in \V_h$ and $u_h\in \W_h$ be the solution of the weak
Galerkin mixed finite element scheme (\ref{wg1})-(\ref{wg2}). Assume
that the exact solution $(\bq; u)$ of (\ref{w-mix1})-(\ref{w-mix2})
is regular such that $u\in H^{s+2}(\Omega)$ and $\bq\in
[H^{s+1}(\Omega)]^d$ with $s\in (0,k]$. Then, one has
\begin{eqnarray}
\3bar \bq_h-Q_h\bq\3bar+\|u_h-\bbQ_hu\|_{1,h}&&\lesssim
h^{s+1}\left(\|u\|_{s+2}+\|\bq\|_{s+1}\right).\label{qh-qq}
\end{eqnarray}
\end{theorem}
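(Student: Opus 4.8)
The plan is to combine the two error equations \eqref{err1} and \eqref{err2} with the \emph{inf-sup} condition of Lemma \ref{lemma5} and the boundedness of Lemma \ref{lemma5-1}, using the approximation estimates from Lemmas \ref{L2errorbound}, \ref{Lemma:5.3}, and \ref{lemma4} to bound the right-hand sides. First I would bound $\3bar\be_h\3bar$. Choosing $\bv=\be_h$ in \eqref{err1} kills the $b(\be_h,\epsilon_h)$ term after we account for \eqref{err2}: more precisely, take $\bv=\be_h$ in \eqref{err1} and $w=\epsilon_h$ in \eqref{err2}, then subtract to obtain
\[
a_s(\be_h,\be_h)=a_s(\bq-Q_h\bq,\be_h)-\sum_{T\in\T_h}\langle(\be_0-\be_b)\cdot\bn,\;\bbQ_hu-u\rangle_{\partial T}+\sum_{T\in\T_h}\langle\bq\cdot\bn-Q_b(\bq\cdot\bn),\;\epsilon_h\rangle_{\partial T}.
\]
Since the left side is $\3bar\be_h\3bar^2$, I estimate each of the three terms on the right: the first by Cauchy–Schwarz in the $\3bar\cdot\3bar$ inner product together with \eqref{apprx}; the second by \eqref{b1} with $s$ in place of $k$, giving $h^{s+1}\|u\|_{s+2}\3bar\be_h\3bar$; and the third by \eqref{b2}, giving $h^{s+1}\|\bq\|_{s+1}\|\nabla_h\epsilon_h\|\le h^{s+1}\|\bq\|_{s+1}\|\epsilon_h\|_{1,h}$. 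This yields
\[
\3bar\be_h\3bar^2\lesssim h^{s+1}\bigl(\|u\|_{s+2}+\|\bq\|_{s+1}\bigr)\bigl(\3bar\be_h\3bar+\|\epsilon_h\|_{1,h}\bigr),
\]
so $\3bar\be_h\3bar\lesssim h^{s+1}(\|u\|_{s+2}+\|\bq\|_{s+1})+C\|\epsilon_h\|_{1,h}^{1/2}\cdot(\cdots)$; to make this clean I would instead argue that $\3bar\be_h\3bar\lesssim h^{s+1}(\|u\|_{s+2}+\|\bq\|_{s+1})+\|\epsilon_h\|_{1,h}$ after a Young's-inequality split, deferring the control of $\|\epsilon_h\|_{1,h}$ to the next step.

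Next I would bound $\|\epsilon_h\|_{1,h}$ using the \emph{inf-sup} condition. By Lemma \ref{lemma5}, there is $\bv\in\V_h$ with $|b(\bv,\epsilon_h)|=\|\epsilon_h\|_{1,h}^2$ and $\3bar\bv\3bar\lesssim\|\epsilon_h\|_{1,h}$. Plugging this $\bv$ into the error equation \eqref{err1} and solving for $b(\bv,\epsilon_h)$ gives
\[
\|\epsilon_h\|_{1,h}^2=b(\bv,\epsilon_h)=a_s(\be_h,\bv)-a_s(\bq-Q_h\bq,\bv)+\sum_{T\in\T_h}\langle(\bv_0-\bv_b)\cdot\bn,\;\bbQ_hu-u\rangle_{\partial T}.
\]
Now each term is controlled: $|a_s(\be_h,\bv)|\le\3bar\be_h\3bar\,\3bar\bv\3bar\lesssim\3bar\be_h\3bar\,\|\epsilon_h\|_{1,h}$; $|a_s(\bq-Q_h\bq,\bv)|\lesssim h^{s+1}\|\bq\|_{s+1}\|\epsilon_h\|_{1,h}$ via \eqref{apprx}; and the boundary sum is $\lesssim h^{s+1}\|u\|_{s+2}\3bar\bv\3bar\lesssim h^{s+1}\|u\|_{s+2}\|\epsilon_h\|_{1,h}$ via \eqref{b1}. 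Dividing through by $\|\epsilon_h\|_{1,h}$ produces
\[
\|\epsilon_h\|_{1,h}\lesssim\3bar\be_h\3bar+h^{s+1}\bigl(\|u\|_{s+2}+\|\bq\|_{s+1}\bigr).
\]

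Finally I would close the loop: substituting the $\|\epsilon_h\|_{1,h}$ bound into the $\3bar\be_h\3bar$ bound gives $\3bar\be_h\3bar\lesssim h^{s+1}(\|u\|_{s+2}+\|\bq\|_{s+1})+\tfrac12\3bar\be_h\3bar$ (after absorbing, the constant on the feedback term can be made $<1$ by the Young split), hence $\3bar\be_h\3bar\lesssim h^{s+1}(\|u\|_{s+2}+\|\bq\|_{s+1})$, and then re-inserting this into the previous display yields the same bound for $\|\epsilon_h\|_{1,h}$. Adding the two estimates gives \eqref{qh-qq}. The main obstacle — and the point requiring the most care — is the coupling between $\3bar\be_h\3bar$ and $\|\epsilon_h\|_{1,h}$: one must be careful to use Young's inequality with the right weights so that, when the two estimates are combined, the feedback coefficient is strictly less than one and can be absorbed into the left-hand side, rather than producing a circular bound. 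Everything else is routine application of the already-established trace, approximation, and inf-sup lemmas.
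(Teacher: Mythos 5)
Your argument is correct, but it reaches the conclusion by a somewhat different route than the paper. Both proofs start from the same error equations (\ref{err1})--(\ref{err2}) and estimate their right-hand sides with exactly the same tools, namely (\ref{apprx}), (\ref{b1}) and (\ref{b2}); the difference lies in how the stability of the discrete saddle-point system is obtained. The paper treats the right-hand sides as linear functionals $\ell$ and $\chi$, verifies boundedness of $a_s$ and $b$ (Lemma \ref{lemma5-1}), positive definiteness of $a_s$ in the triple-bar norm, and the inf-sup condition (Lemma \ref{lemma5}), and then invokes the abstract Babu\v{s}ka--Brezzi stability estimate $\3bar\be_h\3bar+\|\epsilon_h\|_{1,h}\le C(\|\ell\|_{\V_h'}+\|\chi\|_{\W_h'})$ as a black box. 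You instead re-derive that stability bound by hand: the diagonal choice $\bv=\be_h$, $w=\epsilon_h$ controls $\3bar\be_h\3bar$ up to a coupling with $\|\epsilon_h\|_{1,h}$, the inf-sup test function of Lemma \ref{lemma5} controls $\|\epsilon_h\|_{1,h}$ by $\3bar\be_h\3bar$ plus data, and an absorption argument closes the loop. Your version is more elementary and self-contained (in fact it never uses the boundedness of $b$ from Lemma \ref{lemma5-1}, only the inf-sup lemma), while the paper's is shorter once the abstract theory is taken for granted. One remark on the step you flagged as delicate: no careful weight-tuning is needed. The constant $C_1$ in your second estimate, $\|\epsilon_h\|_{1,h}\le C_1\3bar\be_h\3bar + C_2 h^{s+1}(\|u\|_{s+2}+\|\bq\|_{s+1})$, is fixed and independent of any Young parameter, so you may simply substitute it into the quadratic inequality from your first step to get $\3bar\be_h\3bar^2\lesssim h^{s+1}(\|u\|_{s+2}+\|\bq\|_{s+1})\bigl(\3bar\be_h\3bar+h^{s+1}(\|u\|_{s+2}+\|\bq\|_{s+1})\bigr)$; a single application of Young's inequality then yields $\3bar\be_h\3bar\lesssim h^{s+1}(\|u\|_{s+2}+\|\bq\|_{s+1})$ with no circularity, and feeding this back gives the same bound for $\|\epsilon_h\|_{1,h}$, which is (\ref{qh-qq}).
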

\begin{proof} Let
$$
\ell(\bv):=a_s(\bq-Q_h\bq,\;\bv)-\sum_{T\in {\cal T}_h}\langle
(\bv_0-\bv_b)\cdot\bn,\;\bbQ_hu- u\rangle_{\partial T}
$$
be a linear functional on the finite element space $\V_h$. Also, let
$$
\chi(w):=\sum_{T\in {\cal
T}_h}\langle\bq\cdot\bn-Q_b(\bq\cdot\bn),\;w\rangle_{\partial T}
$$
be a linear functional on the finite element space $\W_h$. The error
equations (\ref{err1}) and (\ref{err2}) indicate that the pair
$(\be_h;\ \epsilon_h)$ is a solution of the following problem
\begin{eqnarray}
a_s(\be_h,\bv)-b(\bv,\epsilon_h)
&=&\ell(\bv),\qquad \forall\bv\in \V_h,\label{error-eq.01}\\
b(\be_h,w)&=&\chi(w),\qquad \forall w\in \W_h.\label{error-eq.02}
\end{eqnarray}

The bilinear form $a_s(\cdot,\cdot)$ is clearly bounded, symmetric
and positive definite in $\V_h$ equipped with the triple-bar norm
$\3bar\cdot\3bar$. Lemma \ref{lemma5-1} indicates that the bilinear
form $b(\cdot,\cdot)$ is bounded in $\V_h\times \W_h$, and Lemma
\ref{lemma5} proved that the usual {\em inf-sup} condition of
Bab\u{u}ska \cite{babuska} and Brezzi \cite{brezzi} is satisfied.
Thus, we have from the general theory of Bab\u{u}ska and Brezzi that
\begin{equation}\label{error-estimate-main}
\3bar \be_h\3bar + \|\epsilon_h\|_{1,h} \leq C\left(\|\ell\|_{\V'_h}
+ \|\chi\|_{\W'_h}\right),
\end{equation}
where $\|\ell\|_{\V'_h}$ and $\|\chi\|_{\W'_h}$ are the norm of the
corresponding linear functionals. To estimate the norms, we use
(\ref{apprx}) and (\ref{b1}) to come up with
$$
|\ell(\bv)| \lesssim h^{s+1}\left(\|\bq\|_{s+1} +
\|u\|_{s+2}\right)\3bar\bv\3bar.
$$
Thus, we have
\begin{equation}\label{nov27.01}
\|\ell\|_{\V'_h}\lesssim h^{s+1}\left(\|\bq\|_{s+1} +
\|u\|_{s+2}\right),\qquad 0\le s \le k.
\end{equation}
As to the norm of $\chi$, we use the estimate (\ref{b2}) to obtain
$$
|\chi(w)|\lesssim h^{s+1} \|\bq\|_{s+1}\|w\|_{1,h},\qquad 0\le s \le
k,
$$
which implies that
\begin{equation}\label{nov27.02}
\|\chi\|_{\W'_h}\lesssim h^{s+1} \|\bq\|_{s+1},\qquad 0\le s \le k.
\end{equation}
Substituting (\ref{nov27.01}) and (\ref{nov27.02}) into
(\ref{error-estimate-main}) yields the desired error estimate
(\ref{qh-qq}).
\end{proof}

\section{Error Estimates in $L^2$}\label{error-analysis-ell2}
To obtain an optimal order error estimate for the scalar component
$\epsilon_h=u_h-\bbQ_hu$ in the usual $L^2$-norm, we consider a dual
problem that seeks $\Psi$ and $\phi$ satisfying
\begin{eqnarray}
\alpha\Psi+\nabla \phi&=&0,\; \quad
\mbox{in}\;\Omega\label{dual1}\\
\nabla\cdot \Psi &=&\epsilon_h,\quad
\mbox{in}\;\Omega\label{dual2}\\
\phi&=&0,\; \quad \mbox{on}\; \partial\Omega.\label{dual-bc}
\end{eqnarray}
Assume that the usual $H^2$-regularity is satisfied for the dual
problem; i.e., for any $\epsilon_h\in L^2(\Omega)$, there exists a
unique solution $(\Psi;\phi) \in [H^1(\Omega)]^d\times H^2(\Omega)$
such that
\begin{equation}\label{reg}
\|\phi\|_2+\|\Psi\|_1\lesssim \|\epsilon_h\|.
\end{equation}

\smallskip

\begin{lemma}
Let $(\bq; u)$ be the solution of (\ref{w-mix1})-(\ref{w-mix2}).
Assume that $u\in H^{k+2}(\Omega)$, $\bq\in [H^{k+1}(\Omega)]^d$,
and the coefficient function satisfies $\alpha\in W^{1,\infty}(T)$
on each element $T$. Then, one has the following estimates
\begin{eqnarray}
|a_s(\bq-Q_h\bq, Q_h\Psi)|&&\lesssim h^{k+2}\|\bq\|_{k+1}\|\Psi\|_1,\label{l1}\\
\sum_{T\in {\cal T}_h} \left|\langle Q_0\Psi\cdot\bn-Q_b(\Psi\cdot
\bn), \bbQ_hu-u\rangle_{\partial T}\right|&& \lesssim h^{k+2}
\|u\|_{k+2}\|\Psi\|_1\label{l2},\\
\sum_{T\in {\cal T}_h} \left|\langle \bq\cdot\bn - Q_b(\bq\cdot\bn),
\bbQ_h\phi-\phi\rangle_{\partial T}\right|&&\lesssim
h^{k+2}\|\bq\|_{k+1}\|\phi\|_2\label{l9}.
\end{eqnarray}
\end{lemma}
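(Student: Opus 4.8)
All three estimates share the same structure: each is a sum over $T$ of a boundary pairing between a projection error of one quantity and a projection error of another, and the plan is to reduce each to a product of two $L^2$-type norms via Cauchy--Schwarz on $\partial T$, then insert the $h_T$-weighted trace inequality (\ref{trace}) together with the approximation estimates of Lemma \ref{L2errorbound} and Lemma \ref{Lemma:5.3}. The general template is: for terms like $\sum_T \langle \psi_1, \psi_2\rangle_{\partial T}$ with $\psi_i$ a projection error, bound by $\big(\sum_T h_T\|\psi_1\|_{\partial T}^2\big)^{1/2}\big(\sum_T h_T^{-1}\|\psi_2\|_{\partial T}^2\big)^{1/2}$, and convert each factor to interior norms using (\ref{trace}): $h_T^{-1}\|\psi\|_{\partial T}^2 \lesssim h_T^{-2}\|\psi\|_T^2 + \|\nabla\psi\|_T^2$.

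For (\ref{l2}) I would first replace $\bbQ_h u - u$ by the same quantity minus its average is unnecessary since it already has small $L^2$ norm; instead note $Q_0\Psi\cdot\bn - Q_b(\Psi\cdot\bn)$ is, up to $Q_b$, the boundary projection error of $\Psi$, so by (\ref{fromb2zero})-type reasoning and (\ref{trace}) we get $\sum_T h_T\|Q_0\Psi\cdot\bn - Q_b(\Psi\cdot\bn)\|_{\partial T}^2 \lesssim \sum_T(\|\Psi - Q_0\Psi\|_T^2 + h_T^2\|\nabla(\Psi-Q_0\Psi)\|_T^2) \lesssim h^2\|\Psi\|_1^2$ (using $s=0$ in Lemma \ref{L2errorbound} since $\Psi\in[H^1]^d$ only). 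Pairing this with $\sum_T h_T^{-1}\|\bbQ_h u - u\|_{\partial T}^2 \lesssim \sum_T(h_T^{-2}\|\bbQ_h u-u\|_T^2 + \|\nabla(\bbQ_h u-u)\|_T^2)\lesssim h^{2(k+1)}\|u\|_{k+2}^2$ via (\ref{Jthree-term}) with $s=k+1$ gives the product $h^{1}\cdot h^{k+1} = h^{k+2}$. Estimate (\ref{l9}) is essentially symmetric: $\sum_T h_T\|\bq\cdot\bn - Q_b(\bq\cdot\bn)\|_{\partial T}^2 \lesssim h^{2(k+1)}\|\bq\|_{k+1}^2$ by (\ref{L2boundary}) with $s=k$, and $\sum_T h_T^{-1}\|\bbQ_h\phi - \phi\|_{\partial T}^2 \lesssim h^2\|\phi\|_2^2$ by (\ref{trace}) and (\ref{Jthree-term}) with $s=1$ (here the regularity $\phi\in H^2$ limits us to order one), again yielding $h^{k+2}$.

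For (\ref{l1}), write $a_s(\bq - Q_h\bq, Q_h\Psi) = a(\bq-Q_h\bq, Q_h\Psi) + s(\bq-Q_h\bq, Q_h\Psi)$. The term $a(\bq-Q_h\bq, Q_h\Psi) = (\alpha(\bq-Q_0\bq), Q_0\Psi)$; since $Q_0$ is the $L^2$-projection onto $[P_k(T)]^d$ and $\bq - Q_0\bq \perp [P_k(T)]^d$, I would subtract $Q_0(\alpha Q_0\Psi)$ to exploit orthogonality: $(\alpha(\bq-Q_0\bq), Q_0\Psi)_T = (\bq - Q_0\bq, \alpha Q_0\Psi - Q_0(\alpha Q_0\Psi))_T$, and bound the second factor by $C h_T\|\nabla(\alpha Q_0\Psi)\|_T \lesssim h_T(\|\Psi\|_{1,T} + \ldots)$ using $\alpha\in W^{1,\infty}$ and an inverse inequality on $Q_0\Psi$; combined with $\|\bq - Q_0\bq\|_T \lesssim h_T^{k+1}\|\bq\|_{k+1,T}$ this gives $h^{k+2}\|\bq\|_{k+1}\|\Psi\|_1$. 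For the stabilization term, $s(\bq-Q_h\bq, Q_h\Psi) = \rho\sum_T h_T\langle((\bq-Q_0\bq) - (\bq-Q_b\bq))\cdot\bn, (Q_0\Psi - Q_b\Psi)\cdot\bn\rangle_{\partial T}$, which by Cauchy--Schwarz splits into $\big(s(\bq-Q_h\bq,\bq-Q_h\bq)\big)^{1/2}\big(s(Q_h\Psi, Q_h\Psi)\big)^{1/2}$; the first factor is $\lesssim h^{k+1}\|\bq\|_{k+1}$ by (\ref{apprx}), and for the second I would use $s(Q_h\Psi, Q_h\Psi) = s(Q_h\Psi - \Psi, Q_h\Psi - \Psi)$ (by (\ref{c-p1}), since $\Psi\in[H^1(\Omega)]^d\subset\V$) $\lesssim h^2\|\Psi\|_1^2$ again via (\ref{feb-5.18})-type bounds and Lemma \ref{L2errorbound} with $s=0$, giving the product $h^{k+2}$.

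\textbf{Main obstacle.} The routine part is the trace/Cauchy--Schwarz bookkeeping; the one place needing care is the $a(\cdot,\cdot)$ piece of (\ref{l1}), where the naive bound $\|\alpha^{1/2}(\bq-Q_0\bq)\|\,\|\alpha^{1/2}Q_0\Psi\| \lesssim h^{k+1}\|\bq\|_{k+1}\|\Psi\|$ loses one power of $h$. Recovering the extra power requires the orthogonality trick above and genuinely uses $\alpha\in W^{1,\infty}(T)$ together with an inverse inequality to control $\|\nabla(\alpha Q_0\Psi)\|_T$ by $\|\Psi\|_{1,T}$ — this is the key step and the hypothesis $\alpha\in W^{1,\infty}(T)$ is there precisely for it.
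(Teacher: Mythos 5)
Your treatment of (\ref{l2}) and (\ref{l9}) coincides with the paper's: split the boundary pairing by Cauchy--Schwarz with weights $h_T^{1/2}$ and $h_T^{-1/2}$, control the $\Psi$- or $\bq$-factor through (\ref{fromb2zero}), (\ref{trace}) and Lemma \ref{L2errorbound} (resp. (\ref{L2boundary})), and the $u$- or $\phi$-factor through (\ref{trace}) and (\ref{Jthree-term}); the exponent bookkeeping ($h\cdot h^{k+1}$ and $h^{k+1}\cdot h$) is exactly the paper's. The same is true of your handling of the stabilization part of (\ref{l1}): the identity $s(Q_h\Psi,Q_h\Psi)=s(Q_h\Psi-\Psi,Q_h\Psi-\Psi)$ via (\ref{c-p1}) and the two applications of (\ref{apprx}) reproduce the paper's step $s(\bq-Q_h\bq,Q_h\Psi)=s(\bq-Q_h\bq,Q_h\Psi-\Psi)\le\3bar\bq-Q_h\bq\3bar\,\3bar\Psi-Q_h\Psi\3bar$.

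The one genuine problem is in the $a(\cdot,\cdot)$ part of (\ref{l1}), which you yourself flag as the key step. After writing $(\alpha(\bq-Q_0\bq),Q_0\Psi)_T=(\bq-Q_0\bq,\;\alpha Q_0\Psi-Q_0(\alpha Q_0\Psi))_T$ you bound the second factor by $Ch_T\|\nabla(\alpha Q_0\Psi)\|_T$ and then claim this is $\lesssim h_T\|\Psi\|_{1,T}$ ``using $\alpha\in W^{1,\infty}$ and an inverse inequality on $Q_0\Psi$.'' The inverse inequality does not deliver this: it gives $\|\nabla Q_0\Psi\|_T\lesssim h_T^{-1}\|Q_0\Psi\|_T$, so $h_T\|\alpha\nabla Q_0\Psi\|_T\lesssim\|\Psi\|_T$ with no power of $h_T$, and the product with $\|\bq-Q_0\bq\|_T\lesssim h_T^{k+1}\|\bq\|_{k+1,S(T)}$ is only $O(h^{k+1})$ --- precisely the loss you set out to avoid (for $k=0$ the issue is vacuous since $\nabla Q_0\Psi=0$, but not for $k\ge1$). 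The step can be repaired in two ways: (i) replace the inverse inequality by the $H^1$-stability of the projection, $\|\nabla Q_0\Psi\|_T\lesssim\|\Psi\|_{1,S(T)}$, which follows from (\ref{Jtwo-term}) with $s=0$ and finite overlap of the $S(T)$; or (ii) do what the paper does: compare with $\bar\alpha Q_0\Psi$, where $\bar\alpha$ is the average of $\alpha$ on $T$. Since $\bar\alpha Q_0\Psi\in[P_k(T)]^d$, orthogonality gives $(\alpha(\bq-Q_0\bq),Q_0\Psi)_T=(\bq-Q_0\bq,(\alpha-\bar\alpha)Q_0\Psi)_T$, and $\|\alpha-\bar\alpha\|_{\infty,T}\lesssim h_T\|\nabla\alpha\|_\infty$ supplies the extra power of $h$ with no derivative of $Q_0\Psi$ and no polytopal Bramble--Hilbert estimate for $\alpha Q_0\Psi$ (which your route would otherwise need to justify on arbitrary polygons/polyhedra). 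With either fix your argument goes through and matches the stated rates.
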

\begin{proof} Let $\bar\alpha$ be the average of $\alpha$ on each
element $T$. It follows from the definition of $a_s(\cdot,\cdot)$
and the approximation property (\ref{apprx}) that
\begin{eqnarray*}
|a_s(\bq-Q_h\bq,\; Q_h\Psi)|&&\le|(\alpha(\bq-Q_0\bq),\; Q_0\Psi)|+|s(\bq-Q_0\bq, Q_h\Psi)|\\
&&=|(\bq-Q_0\bq,\; (\alpha-\bar\alpha)Q_0\Psi)|+ |s(\bq-Q_0\bq, Q_h\Psi-\Psi)|\\
&&\le Ch \|\nabla\alpha\|_{\infty} \|\bq-Q_0\bq\|\ \|Q_0\Psi\|+\3bar\bq-Q_h\bq\3bar\ \3bar\Psi-Q_h\Psi\3bar\\
&&\lesssim h^{k+2}\|\bq\|_{k+1}\|\Psi\|_1,
\end{eqnarray*}
where $\|\nabla\alpha\|_{\infty}$ is the $L^\infty$ norm of
$\nabla\alpha$ taken on each element. As to (\ref{l2}), we use
(\ref{trace}) and (\ref{apprx}) to obtain
\begin{eqnarray*}
&&\sum_{T\in {\cal T}_h} \left|\langle  Q_0\Psi\cdot \bn -Q_b(\Psi\cdot\bn), \bbQ_hu-u\rangle_{\partial T}\right|\\
&&\quad =\sum_{T\in {\cal T}_h} \left|\langle  (Q_0\Psi -\Psi)\cdot
\bn-(Q_b(\Psi\cdot\bn)-\Psi\cdot\bn),
\bbQ_hu-u\rangle_{\partial T}\right|\\
&&\quad \lesssim \3bar Q_h\Psi-\Psi\3bar \left(\sum_{T\in {\cal T}_h}(h_T^{-2}\|\bbQ_hu-u\|_T^2
+\|\nabla(\bbQ_hu-u)\|_T^2)\right)^{1/2}\\
&&\quad \lesssim h^{k+2}\|u\|_{k+2}\|\Psi\|_1.
\end{eqnarray*}
Similarly, using (\ref{L2boundary}) we have
\begin{eqnarray*}
&&\sum_{T\in {\cal T}_h}\left|\langle\bq\cdot\bn-Q_b(\bq\cdot\bn),\;
\bbQ_h\phi-\phi\rangle_{\partial T}\right|\\
&&\quad \lesssim h^{k+1}\|\bq\|_{k+1}\left(\sum_{T\in {\cal
T}_h}(h_T^{-2}\|\bbQ_h\phi-\phi\|^2_{T}
+\|\nabla (\bbQ_h\phi-\phi)\|^2_{T})\right)^{\frac12}\\
&&\quad \lesssim h^{k+2}\|\bq\|_{k+1}\|\phi\|_2.
\end{eqnarray*}
This completes the proof of the lemma.
\end{proof}

\medskip

We are now in a position to establish an optimal order error
estimate for the scalar/pressure component $\epsilon_h=u_h-\bbQ_hu$
in the usual $L^2$-norm. The result is stated in the following
theorem.

\smallskip

\begin{theorem}\label{thm2}
Let $\bq_h\in \V_h$ and $u_h\in \W_h$ be the solution of
(\ref{wg1})-(\ref{wg2}). Assume that the exact solution of
(\ref{w-mix1})-(\ref{w-mix2}) satisfies $u\in H^{k+2}(\Omega)$ and
$\bq\in [H^{k+1}(\Omega)]^d$. Then, one has the following error
estimate
\begin{equation}\label{L2errorestimate}
\|u_h-\bbQ_hu\|\lesssim
h^{k+2}\left(\|u\|_{k+2}+\|\bq\|_{k+1}\right).
\end{equation}
\end{theorem}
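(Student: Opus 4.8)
The plan is to use a standard duality argument based on the dual problem $(\ref{dual1})$--$(\ref{dual-bc})$, exploiting the $H^2$-regularity assumption $(\ref{reg})$. The starting point is to test the dual equations against the error functions. First I would take $\bv = \be_h$ in a weak formulation of $(\ref{dual1})$ and $w = \epsilon_h$ in $(\ref{dual2})$, using the projections $Q_h\Psi \in \V_h$ and $\bbQ_h\phi \in \W_h$ as the actual test objects so that everything stays in the finite element spaces. Applying Lemma~\ref{Lemma4-1} and Lemma~\ref{Lemma4-2} (the identities $(\ref{key})$ and $(\ref{key1})$) to rewrite the terms $b(Q_h\Psi, \epsilon_h)$ and $b(\be_h, \bbQ_h\phi)$, together with the error equations $(\ref{err1})$--$(\ref{err2})$ (equivalently $(\ref{error-eq.01})$--$(\ref{error-eq.02})$), should produce an expression of the form
\[
\|\epsilon_h\|^2 = (\nabla\cdot\Psi, \epsilon_h) = b(Q_h\Psi,\epsilon_h) + \sum_{T\in\T_h}\langle \Psi\cdot\bn - Q_b(\Psi\cdot\bn), \epsilon_h\rangle_{\partial T},
\]
and then substituting the error equations converts the right-hand side into a sum of the functionals $\ell(Q_h\Psi)$, $\chi(\bbQ_h\phi)$ and a few boundary correction terms involving $\bbQ_hu - u$, $\bbQ_h\phi - \phi$, and $\Psi - Q_h\Psi$.

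The key step is to recognize that the reason this argument gains an extra power of $h$ over Theorem~\ref{thm1} is that the test functions $Q_h\Psi$ and $\bbQ_h\phi$ are themselves projections of smooth functions, so the terms that were merely $O(h^{s+1})\,\3bar\bv\3bar$ in Lemma~\ref{lemma4} can now be paired against $\3bar Q_h\Psi - \Psi\3bar \lesssim h\|\Psi\|_1$ or against $\|\bbQ_h\phi - \phi\|$-type quantities, each contributing its own factor of $h$. This is exactly the content of the preceding Lemma, whose estimates $(\ref{l1})$, $(\ref{l2})$, $(\ref{l9})$ bound $|a_s(\bq-Q_h\bq, Q_h\Psi)|$, the $Q_0\Psi$-boundary term, and the $\bbQ_h\phi$-boundary term, each by $h^{k+2}$ times the appropriate norms. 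So after collecting terms I would invoke that Lemma directly for those three contributions, and handle any remaining term (e.g.\ one involving $\langle(\be_0-\be_b)\cdot\bn, \phi - \bbQ_h\phi\rangle$ or $s(Q_h\Psi, \be_h)$) by Cauchy--Schwarz, the trace inequality $(\ref{trace})$, the approximation bounds of Lemma~\ref{L2errorbound} and Lemma~\ref{Lemma:5.3}, and the already-established estimate $(\ref{qh-qq})$ for $\3bar\be_h\3bar + \|\epsilon_h\|_{1,h}$.

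Assembling these pieces gives $\|\epsilon_h\|^2 \lesssim h^{k+2}(\|u\|_{k+2} + \|\bq\|_{k+1})(\|\Psi\|_1 + \|\phi\|_2)$, and then the regularity estimate $(\ref{reg})$ replaces $\|\Psi\|_1 + \|\phi\|_2$ by $\|\epsilon_h\|$, so dividing through yields $(\ref{L2errorestimate})$. I expect the main obstacle to be bookkeeping: carefully applying the two identities $(\ref{key})$ and $(\ref{key1})$ in the right direction and to the right arguments so that the boundary correction terms line up \emph{exactly} with the three estimates of the preceding Lemma, with no stray term left over that cannot be absorbed. A secondary subtlety is that $Q_h\Psi$ and $\bbQ_h\phi$ are only $L^2$-projections, not the ``finite-element-space'' interpolants, so one must be sure that $Q_h\Psi \in \V_h$ (true, since $\Psi \in [H^1(\Omega)]^d \subset H(\mathrm{div},\Omega)$ guarantees the normal-continuity needed for $\V$) and that the consistency term $s(Q_h\Psi, \be_h) = s(Q_h\Psi - \Psi, \be_h)$ is small via $(\ref{c-p1})$ and $(\ref{apprx})$, exactly as in the proof of the preceding Lemma.
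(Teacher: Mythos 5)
Your proposal follows essentially the same duality argument as the paper: start from $\|\epsilon_h\|^2=(\nabla\cdot\Psi,\epsilon_h)$ via (\ref{key}), insert the error equation (\ref{err1}) with $\bv=Q_h\Psi$, treat the leftover term $a_s(\be_h,Q_h\Psi)$ by passing to $a(\be_h,\Psi)=-(\nabla\phi,\be_0)$ through (\ref{dual1}), (\ref{key1}) and (\ref{err2}) with $w=\bbQ_h\phi$, estimate the resulting pieces by (\ref{l1})--(\ref{l9}), and absorb the $h\,\3bar\be_h\3bar$ and $h\|\nabla_h\epsilon_h\|$ remainders using (\ref{qh-qq}) before invoking the regularity bound (\ref{reg}). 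This matches the paper's proof in both structure and the key superconvergence mechanism, so the plan is correct.
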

\begin{proof}
Testing (\ref{dual2}) by $\epsilon_h=u_h-\bbQ_hu$ and then using
(\ref{key}) yields
\begin{eqnarray}
(\epsilon_h,\;\epsilon_h)&=&(\nabla\cdot\Psi,\;\epsilon_h)\nonumber
\\&=& b(Q_h\Psi,\; \epsilon_h) +\sum_{T\in {\cal T}_h} \langle
\Psi\cdot\bn-Q_b(\Psi\cdot\bn),\; \epsilon_h\rangle_{\partial
T}.\label{jpw-05}
\end{eqnarray}
It follows from (\ref{err1}) that
\begin{eqnarray*}
b(Q_h\Psi,\; \epsilon_h)&&= a_s(\be_h,\; Q_h\Psi)-a_s(\bq-Q_h\bq,\; Q_h\Psi)\\
&& +\sum_{T} \langle (Q_0\Psi)\cdot \bn-Q_b(\Psi\cdot\bn),\;
\bbQ_hu-u\rangle_{\partial T}.
\end{eqnarray*}
Substituting the above equation into (\ref{jpw-05}) yields
\begin{eqnarray}
(\epsilon_h,\;\epsilon_h)&=& a_s(\be_h,\; Q_h\Psi) -a_s(\bq-Q_h\bq,\; Q_h\Psi)\nonumber\\
 & & +\sum_{T\in\T_h} \langle Q_0\Psi\cdot \bn-Q_b(\Psi\cdot\bn),\; \bbQ_hu-u\rangle_{\partial
 T}\label{hello.08}\\
 & & +\sum_{T\in {\cal T}_h} \langle \Psi\cdot\bn-Q_b(\Psi\cdot\bn),\; \epsilon_h
 \rangle_{\partial T}.\nonumber
\end{eqnarray}
Using (\ref{l2}) we obtain
\begin{equation}\label{hi.188}
\sum_{T\in\T_h} \langle Q_0\Psi\cdot \bn-Q_b(\Psi\cdot\bn),\;
\bbQ_hu-u\rangle_{\partial T} \lesssim h^{k+2} \|u\|_{k+2} \
\|\Psi\|_1.
\end{equation}
Using (\ref{b2}) we arrive at
\begin{equation}\label{hi.189}
\sum_{T\in {\cal T}_h} \langle \Psi\cdot\bn-Q_b(\Psi\cdot\bn),\;
\epsilon_h\rangle_{\partial T} \lesssim h\|\nabla_h\epsilon_h\|\
\|\Psi\|_1.
\end{equation}
From (\ref{l1}) we have
\begin{equation}\label{hi.190}
|a_s(\bq-Q_h\bq,\; Q_h\Psi)|\lesssim h^{k+2}\|\bq\|_{k+1}\
\|\Psi\|_1.
\end{equation}
Now substituting (\ref{hi.188})-(\ref{hi.190}) into (\ref{hello.08})
we obtain
\begin{equation}\label{hi.191}
(\epsilon_h,\;\epsilon_h)\lesssim |a_s(\be_h,\; Q_h\Psi)| +
h^{k+2}\left(\|\bq\|_{k+1} +\|u\|_{k+2}\right)\|\Psi\|_1 +
h\|\nabla_h \epsilon_h\|\ \|\Psi\|_1.
\end{equation}

It remains to deal with $|a_s(\be_h, Q_h\Psi)|$ in (\ref{hi.191}).
To this end, we note that
$$
a_s(\be_h, Q_h\Psi) = a_s(\be_h, Q_h\Psi-\Psi) + a_s(\be_h, \Psi),
$$
and
$$
|a_s(\be_h, Q_h\Psi-\Psi)|\le \3bar\be_h\3bar \ \3bar
Q_h\Psi-\Psi\3bar \lesssim h\3bar\be_h\3bar \ \|\Psi\|_1.
$$
Thus,
\begin{equation}\label{hi.201}
|a_s(\be_h, Q_h\Psi)| \lesssim |a_s(\be_h, \Psi)| + h\3bar\be_h\3bar
\ \|\Psi\|_1.
\end{equation}
Since $\Psi\in [H^1(\Omega)]^d$, then we have $a_s(\be_h,
\Psi)=a(\be_h, \Psi)$; i.e., the stabilization term vanishes when
one of the components is sufficiently regular. Now testing
(\ref{dual1}) against $\be_h$ gives
$$
a(\be_h, \Psi) = (\alpha\Psi, \be_0) = - (\nabla\phi, \be_0).
$$
Furthermore, we have from (\ref{key1}) (with $\bv=\be_h$ and
$w=\phi$) and the fact that $\phi=0$ on $\partial\Omega$ that
$$
(\nabla\phi, \be_0) = -b(\be_h, \bbQ_h\phi)
 + \sum_{T\in {\cal T}_h} \langle (\be_0-\be_b)\cdot\bn, \phi-\bbQ_h\phi\rangle_{\partial T}.
$$
Using the error equation (\ref{err2}) to replace the term $b(\be_h,
\bbQ_h\phi)$ in above equation we obtain
\begin{eqnarray*}
(\nabla\phi, \be_0) &=& -\sum_{T\in {\cal T}_h} \langle \bq\cdot\bn
- Q_b(\bq\cdot\bn), \bbQ_h\phi\rangle_{\partial T}
 + \sum_{T\in {\cal T}_h} \langle (\be_0-\be_b)\cdot\bn, \phi-\bbQ_h\phi\rangle_{\partial
 T}\\
&=& \sum_{T\in {\cal T}_h} \langle \bq\cdot\bn - Q_b(\bq\cdot\bn),
\phi-\bbQ_h\phi\rangle_{\partial T}
 + \sum_{T\in {\cal T}_h} \langle (\be_0-\be_b)\cdot\bn, \phi-\bbQ_h\phi\rangle_{\partial
 T},
\end{eqnarray*}
where we have used the fact that $\phi\in H^1(\Omega)$ and $\phi=0$
on $\partial\Omega$ in the second line. Now using (\ref{l9}) to
estimate the first summation in the above equation, and (\ref{b1})
with $s=0$, $\bv=\be_h$, $u=\phi$ to estimate the second summation
we obtain
\begin{eqnarray*}
|(\nabla\phi, \be_0)|
 \lesssim (h^{k+2}\|\bq\|_{k+1} + h \3bar\be_h\3bar) \ \|\phi\|_2.
\end{eqnarray*}
Thus,
$$
|a_s(\be_h,\Psi)|=|a(\be_h,\Psi)|=|(\nabla\phi, \be_0)|\lesssim
(h^{k+2}\|\bq\|_{k+1} + h \3bar\be_h\3bar) \ \|\phi\|_2.
$$
Substituting the above into (\ref{hi.201}) yields
\begin{equation}\label{hi.308}
|a_s(\be_h,Q_h\Psi)|\lesssim(h^{k+2}\|\bq\|_{k+1} + h
\3bar\be_h\3bar) \ \|\phi\|_2 + h \3bar\be_h\3bar \ \|\Psi\|_1.
\end{equation}
Now substituting (\ref{hi.308}) into (\ref{hi.191}) gives
\begin{eqnarray}\label{hi.891}
(\epsilon_h,\;\epsilon_h)\lesssim && (h^{k+2}\|\bq\|_{k+1} + h
\3bar\be_h\3bar) \ \|\phi\|_2 + h \3bar\be_h\3bar \ \|\Psi\|_1 \\
&& + h^{k+2}\left(\|\bq\|_{k+1} +\|u\|_{k+2}\right)\|\Psi\|_1 +
h\|\nabla_h \epsilon_h\|\ \|\Psi\|_1.\nonumber
\end{eqnarray}

Observe that the $H^2$-regularity of the dual problem implies the
following estimate
$$
\|\phi\|_2 + \|\Psi\|_1\lesssim \|\epsilon_h\|.
$$
Substituting the above into (\ref{hi.891}) and then dividing both
sides by $\|\epsilon_h\|$ we obtain
$$
\|\epsilon_h\|\lesssim  h (\3bar\be_h\3bar+\|\nabla_h \epsilon_h\|)
+ h^{k+2}\left(\|\bq\|_{k+1} +\|u\|_{k+2}\right),
$$
which, together with (\ref{qh-qq}), implies the desired $L^2$ error
estimate (\ref{L2errorestimate}). This completes the proof.
\end{proof}

\appendix

\section*{Appendix}
\setcounter{section}{1} The goal of this Appendix is to establish
some fundamental estimates useful in the error estimate for WG-MFEM.
First, we derive a trace inequality for functions defined on the
finite element partition $\T_h$ with properties specified in Section
\ref{wg-mfem}.

\begin{lemma}[Trace Inequality] Let $\T_h$ be a partition of the domain $\Omega$ into
polygons in 2D or polyhedra in 3D. Assume that the partition $\T_h$
satisfies the assumptions (A1), (A2), and (A3) as specified in
Section \ref{wg-mfem}. Then, there exists a constant $C$ such that
for any $T\in\T_h$ and edge/face $e\in\partial T$, we have
\begin{equation}\label{trace}
\|\theta\|_{e}^2 \leq C \left( h_T^{-1} \|\theta\|_T^2 + h_T
\|\nabla \theta\|_{T}^2\right),
\end{equation}
where $\theta\in H^1(T)$ is any function.
\end{lemma}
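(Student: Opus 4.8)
The plan is to establish the trace inequality \eqref{trace} by transporting the standard trace inequality on a reference simplex to the general polygon/polyhedron $T$ via the pyramid structure guaranteed by assumption (A3). The key observation is that each face $e\in\partial T$ together with its apex $A_e$ forms a pyramid $P(e,T,A_e)\subset T$ whose height is comparable to $h_T$ and whose base is exactly $e$; it therefore suffices to prove
\[
\|\theta\|_e^2 \lesssim h_T^{-1}\|\theta\|_{P(e,T,A_e)}^2 + h_T\|\nabla\theta\|_{P(e,T,A_e)}^2
\]
and then bound the right-hand side by the corresponding norms over all of $T$, since $P(e,T,A_e)\subset T$.

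First I would set up the divergence-theorem identity on the pyramid: pick a smooth vector field $\boldsymbol\beta$ on $P(e,T,A_e)$ that equals $\bn$ (the outward normal of $e$) scaled appropriately and vanishes, or is controlled, on the lateral faces and at the apex; a natural choice is $\boldsymbol\beta(\bx) = c(\bx - A_e)$, whose divergence is the constant $cd$ and whose flux through the lateral faces is zero because those faces contain $A_e$ and hence $\bx - A_e$ is tangent to them. Applying the divergence theorem to $\nabla\cdot(\theta^2\boldsymbol\beta)$ over the pyramid gives
\[
\int_e \theta^2\,(\boldsymbol\beta\cdot\bn)\,ds = \int_{P(e,T,A_e)} \bigl(cd\,\theta^2 + 2c\,\theta\,\nabla\theta\cdot(\bx-A_e)\bigr)\,dV.
\]
On the base $e$, $\boldsymbol\beta\cdot\bn = c\,(\bx_e - A_e)\cdot\bn$, which by the acute-angle condition in (A3) (the angle lies in $[0,\theta_0]$ with $\theta_0<\pi/2$) is bounded below by $c\cos\theta_0$ times the height $\sigma_e h_T \ge \sigma^* h_T$; so $\boldsymbol\beta\cdot\bn \gtrsim h_T$ on $e$ after normalizing $c$. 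Meanwhile $|\bx - A_e|\le \mathrm{diam}(T) \lesssim h_T$ throughout the pyramid. Rearranging and using the Cauchy--Schwarz inequality on the cross term $2c\int\theta\,\nabla\theta\cdot(\bx-A_e)$, followed by Young's inequality to absorb, yields
\[
h_T\|\theta\|_e^2 \lesssim \|\theta\|_{P(e,T,A_e)}^2 + h_T^2\|\nabla\theta\|_{P(e,T,A_e)}^2,
\]
which is exactly \eqref{trace} after dividing by $h_T$ and enlarging the integration domain to $T$. A density argument extends this from smooth $\theta$ to all $\theta\in H^1(T)$.

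The step I expect to be the main obstacle is controlling the geometry-dependent constants uniformly in $T$: one must check that the lower bound on $\boldsymbol\beta\cdot\bn$ over $e$, the upper bound $|\bx-A_e|\lesssim h_T$, and the volume comparison for $P(e,T,A_e)$ all hold with constants depending only on $\sigma^*$, $\theta_0$, $\varrho_v$, $\varrho_e$, $\kappa$ and not on the particular element. This is where assumptions (A1)--(A3) do the real work: (A3) pins down the apex height and the acute-angle condition, while (A1)--(A2) keep $|e|$, $|T|$, and $h_e$ comparable to the right powers of $h_T$ so that no degenerate thin-sliver pyramid spoils the estimate. Once these uniform geometric bounds are in place, the rest is the routine divergence-theorem computation sketched above.
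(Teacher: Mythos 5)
Your argument is correct, and it reaches (\ref{trace}) by a genuinely different route than the paper. The paper's proof works pointwise along the rays joining each base point $\bx_e\in e$ to the apex $A_e$: it applies the one-dimensional fundamental theorem of calculus to $\theta^2$ on each segment, then integrates over the face and over the ray parameter, using the explicit parametrization of the pyramid and a lower bound on the resulting Jacobian, $J(\tau,\xi,\eta)\ge \frac{\mu_0}{4}|\phi_\xi\times\phi_\eta|\,|\omega|$, which is where the acute-angle part of {\bf A3} enters; the height condition $|\omega|\gtrsim h_T$ then produces the $h_T^{-1}$ and $h_T$ scalings. You instead apply the divergence theorem on the pyramid $P(e,T,A_e)$ to $\theta^2\boldsymbol\beta$ with $\boldsymbol\beta(\bx)=c(\bx-A_e)$, exploiting that the normal flux of $\boldsymbol\beta$ vanishes on the lateral faces (they contain $A_e$) and that on the base $(\bx_e-A_e)\cdot\bn$ equals the pyramid height $\sigma_e h_T\ge\sigma^* h_T$ (so your extra factor $\cos\theta_0$ is a harmless underestimate---the acute-angle condition is not even needed for your bound, only the height and containment in $T$), while $|\bx-A_e|\le h_T$ controls the volume term; Cauchy--Schwarz and Young then give $h_T\|\theta\|_e^2\lesssim \|\theta\|_{P}^2+h_T^2\|\nabla\theta\|_{P}^2$ with constants depending only on $d$ and $\sigma^*$, uniformly in $T$. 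What each approach buys: yours is the classical, shorter scaled-trace argument on star-shaped domains, avoids the surface parametrization and Jacobian computation entirely, and makes the uniformity of the constant transparent; the paper's ray-wise argument is more elementary (only 1D calculus plus a change of variables) and makes explicit how the ``standing-up'' angle condition of {\bf A3} controls the geometry. Your closing density remark is the right way to pass from $C^1(\overline{T})$ to $H^1(T)$, exactly as in the paper.
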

\begin{proof}
We shall provide a proof for the case of $e$ being a flat face and
$\theta\in C^1(T)$. To this end, let the flat face $e$ be
represented by the following parametric equation:
\begin{eqnarray}\label{parametric-e}
\bx_e=\phi(\xi,\eta):=(\phi_1(\xi, \eta),
\phi_2(\xi,\eta),\phi_3(\xi,\eta))
\end{eqnarray}
for $(\xi,\eta)\in D\subset \mathbb{R}^2$. By Assumption {\bf A3},
there exists a pyramid $P(e,T,A_e)$ with apex
$A_e=\bx_*:=(x_1^*,x_2^*,x_3^*)$ contained in the element $T$, see
Figure \ref{fig:shape-regular-element}. This pyramid has the
following parametric representation:
\begin{eqnarray}\label{pyramid}
\bx(t,\xi,\eta)&=&(1-t)\phi(\xi, \eta)+t\bx_*
\end{eqnarray}
for $(t, \xi,\eta)\in [0,1]\times D$. For any given $\bx_e\in e$,
the line segment joining $\bx_e$ and the apex $\bx_*$ can be
represented by
$$
\bx(t) = \bx_e + t(\bx_*-\bx_e).
$$
From the fundamental theorem of Calculus,  we have
$$
\theta^2(\bx_e)-\theta^2(\bx(t))=-\int_0^{t}\partial_\tau\theta^2(\bx_e+\tau\omega)d\tau,
\qquad\omega = {\bx_*-\bx_e}.
$$
The above can be further rewritten as
$$
\theta^2(\bx_e)-\theta^2(\bx(t))=-2\int_0^{t}\theta \left(
\nabla\theta\cdot \omega\right) d\tau.
$$
It follows from the Cauchy-Schwarz inequality that for any $t\in
[0,\frac12]$ we have
$$
\theta^2(\bx_e) \le \theta^2(\bx(t)) +
h_T^{-1}\int_0^{\frac12}\theta^2 |\omega| d\tau +h_T
\int_0^{\frac12}|\nabla\theta|^2 |\omega| d\tau.
$$
Now we integrate the above inequality over the flat face $e$ by
using the parametric equation (\ref{parametric-e}), yielding
\begin{eqnarray}
\int_D\theta^2(\bx_e)|\phi_\xi\times\phi_\eta| d\xi d\eta &\le&
\int_D\theta^2(\bx(t))|\phi_\xi\times\phi_\eta| d\xi d\eta \nonumber\\
& & +h_T^{-1}\int_0^{\frac12}\int_D\theta^2 |\omega|
|\phi_\xi\times\phi_\eta|
d\xi d\eta d\tau \label{aaa.01}\\
& & +h_T\int_0^{\frac12}\int_D|\nabla\theta|^2 |\omega|
|\phi_\xi\times\phi_\eta| d\xi d\eta d\tau.\label{aaa.02}
\end{eqnarray}
Observe that the integral of a function over the following
prismatoid
$$
P_{\frac12}:=\left\{\bx(t,\xi,\eta):\quad (t,\xi,\eta)\in
[0,1/2]\times D \right\}
$$
is given by
$$
\int_{P_{\frac12}} f(\bx) d\bx= \int_{0}^{\frac12}\int_D
f(\bx(\tau,\xi,\eta)) J(\tau,\xi,\eta) d\xi d\eta d\tau,
$$
where $J(\tau,\xi,\eta)=(1-\tau)^2 |(\phi_\xi\times \phi_\eta)\cdot
\omega|$ is the Jacobian from the coordinate change. The vector
$\phi_\xi\times \phi_\eta$ is normal to the face $e$, and
$\omega=\bx_*-\bx_e$ is a vector from the base point $\bx_e$ to the
apex $\bx_*$. The angle assumption (see Assumption {\bf A3} of
Section \ref{wg-mfem}) for the prism $P(e,T,A_e)$
indicates that the Jacobian satisfies the following relation
\begin{equation}\label{jacobian-estimate}
J(\tau,\xi,\eta)\ge \frac{\mu_0}{4} |\phi_\xi\times \phi_\eta|\
|\omega|,\quad \tau\in [0,1/2]
\end{equation}
for some fixed $\mu_0\in (0,1)$. Observe that the
left-hand side of (\ref{aaa.01}) is the surface integral of
$\theta^2$ over the face $e$. Thus, substituting
(\ref{jacobian-estimate}) into (\ref{aaa.01}) and (\ref{aaa.02})
yields
\begin{eqnarray*}
\int_e\theta^2 de &\le&
\int_D\theta^2(\bx(t))|\phi_\xi\times\phi_\eta| d\xi d\eta \nonumber\\
& & +4\mu_0^{-1}h_T^{-1}\int_{P_{\frac12}} \theta^2 d\bx
+4\mu_0^{-1}h_T\int_{P_{\frac12}}|\nabla\theta|^2 d\bx.
\end{eqnarray*}
Now we integrate the above with respect to $t$ in the interval
$[0,\frac12]$ to obtain
\begin{eqnarray}
\frac12 \int_e\theta^2 de &\le&
\int_0^{\frac12}\int_D\theta^2(\bx(t))|\phi_\xi\times\phi_\eta| d\xi d\eta dt\label{aaa.03}\\
& & +2\mu_0^{-1}h_T^{-1}\int_{P_{\frac12}} \theta^2 d\bx
+2\mu_0^{-1}h_T\int_{P_{\frac12}}|\nabla\theta|^2 d\bx.\nonumber
\end{eqnarray}
Again, by substituting (\ref{jacobian-estimate}) into the right-hand
side of (\ref{aaa.03}) we arrive at
\begin{eqnarray}
\frac12 \int_e\theta^2 de &\le& 4\mu_0^{-1} |\omega|^{-1}
\int_0^{\frac12}\int_D\theta^2(\bx(t))J(t,\xi,\eta) d\xi d\eta dt
\label{aaa.04}\\
& & +2\mu_0^{-1}h_T^{-1}\int_{P_{\frac12}} \theta^2 d\bx
+2\mu_0^{-1}h_T\int_{P_{\frac12}}|\nabla\theta|^2 d\bx.\nonumber
\end{eqnarray}
The integral on the right-hand side of (\ref{aaa.04}) is the
integral of $\theta^2$ on the prismatoid $P_{\frac12}$. It can be
seen from the Assumption {\bf A3} that
$$
|\omega|^{-1} \leq \alpha_* h_T^{-1}
$$
for some positive constant $\alpha_*$. Therefore, we have from the
above estimate that
$$
\int_e \theta^2 de \leq C h_T^{-1} \int_{P_{\frac12}} \theta^2 d\bx
+ C h_T\int_{P_{\frac12}} |\nabla\theta|^2 d\bx,
$$
which completes the proof of the Lemma.
\end{proof}
\medskip

Next, we would like to establish an estimate for the $L^2$ norm of polynomial functions by their $L^2$ norm on a subdomain. To this end, we first derive a result of similar nature for general functions in $H^1$.

\begin{lemma}\label{appendix-lemma2}
Let $K\subset\mathbb{R}^d$ be convex and $v\in H^1(K)$. Then,
\begin{equation}\label{aaa.08}
\|v\|^2_{K} \leq \frac{2|K|}{|S|} \|v\|_{S}^2
+\frac{4\delta^{2(d+1)}}{|S|^2d^2}\|\nabla v\|_K^2,
\end{equation}
where $\delta$ is the diameter of $K$ and $S$ is any measurable
subset of $K$.
\end{lemma}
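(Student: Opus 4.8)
The plan is to reduce (\ref{aaa.08}) to a pointwise Poincar\'e--Sobolev estimate obtained by integrating $\nabla v$ along the straight segments contained in the convex set $K$, and then to promote that bound to the desired $L^2$ estimate via Cauchy--Schwarz and Fubini.

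\emph{Splitting off the mean.} Write $v_S:=|S|^{-1}\int_S v\,dy$. Then $\|v\|_K^2=\int_K\bigl(v_S+(v-v_S)\bigr)^2\,dx\le 2|K|\,v_S^2+2\|v-v_S\|_K^2$, and, by the Cauchy--Schwarz inequality, $v_S^2\le|S|^{-1}\|v\|_S^2$; hence the first term is bounded by $\tfrac{2|K|}{|S|}\|v\|_S^2$. It remains to show $\|v-v_S\|_K^2\lesssim\tfrac{\delta^{2(d+1)}}{|S|^2 d^2}\|\nabla v\|_K^2$.

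\emph{The pointwise estimate and its $L^2$ upgrade.} For a.e.\ $x\in K$ and every $y\in S$ the segment $[x,y]$ lies in $K$, so, with $\omega=(y-x)/|y-x|$,
\[
v(x)-v(y)=-\int_0^{|x-y|}\nabla v(x+s\omega)\cdot\omega\,ds
\]
(valid on almost every line because $v\in H^1$, or by density of $C^1(\overline{K})$). Extending $\nabla v$ by zero outside $K$ and using $|x-y|\le\delta$ gives $|v(x)-v(y)|\le\int_0^{\delta}|\nabla v(x+s\omega)|\,ds$, a quantity depending on $y$ only through its direction $\omega$. Averaging over $y\in S$ and passing to polar coordinates centered at $x$---for which $x,y\in K$ forces the radial variable into $[0,\delta]$---the radial integration contributes at most a factor $\delta^d/d$ while the angular integral recombines into a Riesz potential:
\[
|v(x)-v_S|\ \le\ \frac{\delta^{d}}{d\,|S|}\int_K\frac{|\nabla v(z)|}{|x-z|^{d-1}}\,dz .
\]
Squaring, and applying Cauchy--Schwarz after writing $|x-z|^{1-d}=|x-z|^{(1-d)/2}|x-z|^{(1-d)/2}$, one finds $|v(x)-v_S|^2\le\tfrac{\delta^{2d}}{d^2|S|^2}\bigl(\int_K|x-z|^{1-d}\,dz\bigr)\int_K|x-z|^{1-d}|\nabla v(z)|^2\,dz$. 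Since $K$ has diameter $\delta$ it is contained in the ball of radius $\delta$ about any of its points, so $\int_K|x-z|^{1-d}\,dz\le\int_{B_\delta(x)}|x-z|^{1-d}\,dz=|\mathbb{S}^{d-1}|\,\delta$ for every $x\in K$; integrating the previous inequality over $x\in K$ and invoking Fubini (with the same kernel bound in the $x$ variable) yields $\|v-v_S\|_K^2\le\tfrac{|\mathbb{S}^{d-1}|^2\delta^{2(d+1)}}{d^2|S|^2}\|\nabla v\|_K^2$, which has the exponents recorded in (\ref{aaa.08}). Combining with the mean term completes the proof.

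The only delicate point is the apparent non-integrability near $x=y$: a naive substitution $z=(1-t)y+tx$ in the double integral over $S\times(0,1)$ produces a divergent $\int_0^1 t^{-d}\,dt$. Parametrizing the segment by arclength and integrating the angular variable of $y$ first is precisely what repairs this, turning the double integral into a single convergent Riesz potential and at the same time extracting the geometric factors $\delta^d/d$ and $\delta$ that account for the powers of $\delta$, $|S|$ and $d$ in (\ref{aaa.08}).
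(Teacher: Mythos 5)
Your proof is correct in substance but follows a genuinely different route from the paper's. The paper applies the fundamental theorem of calculus directly to $v^2$ along the segment from $\bx$ to $\by\in S$, uses a Young-type inequality with a free parameter $\epsilon$, converts the $\by$-integration into Riesz potentials of $v^2$ and $|\nabla v|^2$ by the same polar-coordinate device you use, and then absorbs the resulting $\int_K v^2$ term into the left-hand side through the choice $\epsilon=|S|d/(2\delta^d)$. You instead split off the mean $v_S$, reduce to a Poincar\'e-type bound for $v-v_S$, derive the pointwise Riesz-potential estimate $|v(x)-v_S|\le \frac{\delta^d}{d\,|S|}\int_K|x-z|^{1-d}|\nabla v(z)|\,dz$ (the classical Gilbarg--Trudinger-style argument), and finish with Cauchy--Schwarz on the split kernel plus Fubini; no absorption is needed. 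Both routes rest on convexity and the same radial/angular bookkeeping: a factor $\delta^d/d$ from the radial variable and one extra $\delta$ from $\int_K|x-z|^{1-d}\,dz$. The one caveat is the constant: your gradient term carries $2|\mathbb{S}^{d-1}|^2$ where the lemma states $4$, so as written you prove (\ref{aaa.08}) only up to a dimension-dependent factor. This is harmless for the paper, which only ever uses the lemma with unspecified constants (cf.\ (\ref{aaa.38})); moreover the paper's own final integration over $\bx\in K$ tacitly uses $\int_K|\bx-\by|^{1-d}\,d\bx\le\delta$, which likewise drops a factor $|\mathbb{S}^{d-1}|$, so the advertised constants are not actually sharp there either. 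In short, your mean-splitting buys a cleaner, absorption-free argument; the paper's parameterized inequality is what produces its explicit (if optimistic) constants.
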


\begin{proof}
Since $C^1(K)$ is dense in $H^1(K)$, it is sufficient to establish
(\ref{aaa.08}) for $v\in C^1(K)$. For any $\bx, \by\in K$, we have
$$
v^2(\bx)=v^2(\by) - \int_0^{|\bx-\by|} \partial_r v^2(\bx+r\omega)
dr,\quad \omega=\frac{\by-\bx}{|\by-\bx|}.
$$
From the usual chain rule and the Cauchy-Schwarz inequality we obtain
\begin{eqnarray}
v^2(\bx)&=&v^2(\by) - 2\int_0^{|\bx-\by|} v \partial_r v(\bx+r\omega)
dr\nonumber\\
&\le & v^2(\by) + \epsilon\delta^{-1} \int_0^{|\bx-\by|} v^2 dr + \epsilon^{-1}\delta\int_0^{|\bx-\by|} |\partial_r v|^2 dr,\label{aaa.10}
\end{eqnarray}
where $\epsilon>0$ is any constant. Let
$$
V(\bx)=\left\{
\begin{array}{ll}
v^2(\bx),&\qquad\bx\in K\\
0,&\qquad\bx\notin K
\end{array}
\right.
$$
and
$$
W(\bx)=\left\{
\begin{array}{ll}
|\partial_r v|^2(\bx),&\qquad\bx\in K\\
0,&\qquad\bx\notin K.
\end{array}
\right.
$$
Then, the inequality (\ref{aaa.10}) can be rewritten as
$$
v^2(\bx)\le v^2(\by) + \epsilon\delta^{-1} \int_0^{\infty} V(\bx+r\omega)dr + \epsilon^{-1}\delta\int_0^{\infty}
W(\bx+r\omega)dr.
$$
Integrating the above inequality with respect to $\by$ in $S$ yields
\begin{eqnarray}
& & |S| v^2(\bx) \leq \int_S v^2 dS \label{aaa.16}\\
& & \ + \int_{|\bx-\by|\le \delta}\left(\epsilon\delta^{-1}\int_0^{\infty} V(\bx+r\omega)dr+\epsilon^{-1}\delta \int_0^{\infty} W(\bx+r\omega)dr  \right)d\by.\nonumber
\end{eqnarray}
It is not hard to see that
\begin{eqnarray}
\int_{|\bx-\by|\le \delta}\int_0^{\infty} V(\bx+r\omega)dr d\by &=& \int_0^\infty\int_{|\omega|=1}\int_0^\delta V(\bx+r\omega) \rho^{d-1}\ d\rho\ d\omega\ dr\nonumber\\
&=&\frac{\delta^d}{d}\int_0^\infty\int_{|\omega|=1}V(\bx+r\omega) d\omega\ dr\nonumber\\
&=&\frac{\delta^d}{d}\int_K |\bx-\by|^{1-d} v^2(\by)d\by.\label{aaa.17}
\end{eqnarray}
Analogously, we have
\begin{equation}\label{aaa.18}
\int_{|\bx-\by|\le \delta}\int_0^{\infty} W(\bx+r\omega)dr d\by =\frac{\delta^d}{d}\int_K |\bx-\by|^{1-d} |\partial_r v(\by)|^2d\by.
\end{equation}
Substituting (\ref{aaa.17}) and (\ref{aaa.18}) into (\ref{aaa.16}) yields
$$
|S| v^2(\bx) \leq \int_S v^2 dS + \frac{\epsilon\delta^{d-1}}{d}\int_K |\bx-\by|^{1-d} v^2(\by)d\by
+\frac{\delta^{d+1}}{\epsilon d}\int_K |\bx-\by|^{1-d} |\nabla v(\by)|^2d\by.
$$
Now integrating both sides with respect to $\bx$ in $K$ gives
$$
|S|\int_K v^2dK\leq |K|\int_S v^2 dS +\frac{\epsilon\delta^{d}}{d}\int_K v^2dK
+\frac{\delta^{d+2}}{\epsilon d}\int_K |\nabla v|^2dK,
$$
which yields the desired estimate (\ref{aaa.08}) by setting $\epsilon=\frac{|S|d}{2\delta^d}$.
\end{proof}
\medskip

Consider a case of Lemma \ref{appendix-lemma2} in which the convex domain $K$ is a shape regular $d$-simplex. Denote by $h_K$ the diameter of $K$. The shape regularity implies that
\begin{enumerate}
\item the measure of $K$ is proportional to $h_K^d$,
\item there exists an inscribed ball $B_K\subset K$ with diameter proportional to $h_K$.
\end{enumerate}
Now let $S$ be a ball inside of $K$ with radius $r_S\ge \varsigma_* h_K$. Then, there exists a fixed constant $\kappa_*$ such that
\begin{equation}\label{aaa.21}
|K|\leq \kappa_* |S|.
\end{equation}
Apply (\ref{aaa.21}) in (\ref{aaa.08}) and notice that $\tau_* |S|\ge h_K^d$ and $\delta = h_K$. Thus,
\begin{equation}\label{aaa.28}
\|v\|^2_{K} \leq 2 \kappa_* \|v\|_{S}^2
+\frac{4\tau_*^2 h_K^{2}}{d^2}\|\nabla v\|_K^2.
\end{equation}
For simplicity of notation, we shall rewrite (\ref{aaa.28}) in the following form
\begin{equation}\label{aaa.38}
\|v\|^2_{K} \leq a_0 \|v\|_{S}^2+a_1 h_K^{2}\|\nabla v\|_K^2.
\end{equation}
If $v$ is infinitely smooth, then a recursive use of the estimate (\ref{aaa.38}) yields the following result
\begin{equation}\label{aaa.39}
\|v\|^2_{K} \leq \sum_{j=0}^n a_j h_K^{2j}\|\nabla^j v\|_{S}^2+a_{n+1} h_K^{2n+2}\|\nabla^{n+1} v\|_K^2.
\end{equation}
In particular, if $v$ is a polynomial of degree $n$, then
\begin{equation}\label{aaa.48}
\|v\|^2_{K} \leq \sum_{j=0}^n a_j h_K^{2j}\|\nabla^j v\|_{S}^2.
\end{equation}
The standard inverse inequality implies that
$$
\|\nabla^j v\|_{S} \lesssim h_K^{-j}\|v\|_S.
$$
Substituting the above into (\ref{aaa.48}) gives
\begin{equation}\label{aaa.58}
\|v\|^2_{K} \lesssim \|v\|_{S}^2.
\end{equation}
The result is summarized as follows.
\begin{lemma}[Domain Inverse Inequality]\label{appendix.thm}
Let $K\subset\mathbb{R}^d$ be a $d$-simplex which has diameter $h_K$ and is shape regular.
Assume that $S$ is a ball in $K$ with diameter $r_S$ proportional to $h_K$; i.e., $r_S\ge \varsigma_* h_K$ with a fixed $\varsigma_*>0$. Then, there exists a constant $C=C(\varsigma_*, n)$ such that
\begin{equation}\label{aaa.58-new}
\|v\|^2_{K} \le C(\varsigma_*,n) \|v\|_{S}^2
\end{equation}
for any polynomial $v$ of degree no more than $n$.
\end{lemma}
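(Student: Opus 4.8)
The plan is to obtain the bound as a \emph{finite} iteration of the $H^1$-type inequality of Lemma~\ref{appendix-lemma2}, which terminates because the high-order derivatives of a polynomial of degree $n$ vanish identically, and then to absorb the derivative norms on the ball $S$ by a scaled inverse inequality.

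First I would specialize Lemma~\ref{appendix-lemma2} to the present configuration. Since $K$ is a shape-regular $d$-simplex it is convex, $|K|$ is comparable to $h_K^d$, and the ball $S\subset K$ of radius at least $\varsigma_* h_K$ also has $|S|$ comparable to $h_K^d$; in particular $|K|\le\kappa_*|S|$ with $\kappa_*=\kappa_*(\varsigma_*,d)$, which is the observation (\ref{aaa.21}). Substituting this together with $\delta=h_K$ into (\ref{aaa.08}) yields an inequality of the form (\ref{aaa.38}),
\[
\|v\|_K^2\le a_0\|v\|_S^2+a_1 h_K^2\|\nabla v\|_K^2,
\]
with $a_0,a_1$ depending only on $\varsigma_*$ and $d$.

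Next I would iterate. Each partial derivative $\partial^\beta v$ is again a polynomial, hence lies in $H^1(K)$, so (\ref{aaa.38}) applies to it; summing over the multi-indices $\beta$ of a fixed order and feeding the resulting bound back into the estimate at the next lower order produces (\ref{aaa.39}),
\[
\|v\|_K^2\le\sum_{j=0}^{n} a_j h_K^{2j}\|\nabla^j v\|_S^2+a_{n+1}h_K^{2n+2}\|\nabla^{n+1}v\|_K^2 .
\]
Because $v$ has degree at most $n$, its $(n+1)$-st order derivatives vanish identically and the remainder term drops, leaving (\ref{aaa.48}). It then remains to bound each $\|\nabla^j v\|_S$; since $S$ is a ball with radius proportional to $h_K$, the standard inverse inequality on $S$ gives $\|\nabla^j v\|_S\lesssim h_K^{-j}\|v\|_S$ for $0\le j\le n$, and substituting this into (\ref{aaa.48}) collapses the sum to $\|v\|_K^2\le C(\varsigma_*,n)\|v\|_S^2$, which is (\ref{aaa.58-new}).

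The step I expect to require the most care is the iteration: one must apply Lemma~\ref{appendix-lemma2} componentwise to the tensor-valued functions $\nabla^j v$ and keep track of how the constants $a_j$ compound (they pick up combinatorial factors counting the derivatives of order $j$), but since $d$ and $n$ are fixed this affects only the value of the final constant, not the scaling in $h_K$. A secondary point worth stating explicitly is that the inverse inequality must be invoked on $S$, not on $K$ — this is precisely where the hypothesis $r_S\ge\varsigma_* h_K$ is used — and it follows from the usual homogeneity argument on a reference ball, with constant depending only on $j$, $d$, and $\varsigma_*$.
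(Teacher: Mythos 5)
Your proposal is correct and follows essentially the same route as the paper: specialize Lemma \ref{appendix-lemma2} to the shape-regular simplex with $|K|\le\kappa_*|S|$ and $\delta=h_K$, iterate the resulting bound (\ref{aaa.38}) on derivatives until the remainder vanishes by the degree-$n$ assumption, and then absorb $\|\nabla^j v\|_S$ via the standard inverse inequality on the ball $S$. Your explicit remarks on tracking the combinatorial constants and on invoking the inverse inequality on $S$ (where the hypothesis $r_S\ge\varsigma_* h_K$ enters) are accurate refinements of the same argument.
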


\medskip
The usual inverse inequality in finite element analysis also holds
true for piecewise polynomials defined on the finite element
partition $\T_h$ provided that it satisfies the assumptions {\bf
A1-A4} of Section \ref{wg-mfem}.

\begin{lemma}[Inverse Inequality]\label{appendix.inverse-inq}
Let $\T_h$ be a finite element partition of $\Omega$ consisting of
polygons or polyhedra. Assume that $\T_h$ satisfies all the
assumptions {\bf A1-A4} as specified in Section \ref{wg-mfem}. Then,
there exists a constant $C=C(n)$ such that
\begin{equation}\label{aaa.88-new}
\|\nabla \varphi\|_{T} \le C(n) h^{-1}_T \|\varphi\|_{T},\qquad
\forall T\in \T_h
\end{equation}
for any piecewise polynomial $\varphi$ of degree $n$ on $\T_h$.
\end{lemma}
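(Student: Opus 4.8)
The plan is to reduce the inequality on the polygonal/polyhedral element $T$ to the classical inverse inequality on the circumscribed simplex $S(T)$ furnished by Assumption {\bf A4}, and then to transfer the resulting bound back from $S(T)$ to $T$ by means of the domain inverse inequality of Lemma \ref{appendix.thm}. Throughout, one should keep in mind that $\varphi|_T$ is a polynomial of degree $n$, hence has a canonical extension to a polynomial on all of $\mathbb{R}^d$; in particular, $\|\varphi\|_{S(T)}$ and $\|\nabla\varphi\|_{S(T)}$ are well defined even though $T$ is only a proper subset of $S(T)$.

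First I would use the inclusion $T\subset S(T)$ to write $\|\nabla\varphi\|_T\le\|\nabla\varphi\|_{S(T)}$. Since $S(T)$ is a shape-regular $d$-simplex of diameter $h_{S(T)}$, the standard inverse inequality on simplices (see, e.g., \cite{ci}) gives $\|\nabla\varphi\|_{S(T)}\le C(n)\,h_{S(T)}^{-1}\|\varphi\|_{S(T)}$ for polynomials of degree at most $n$; and because $h_{S(T)}\le\gamma_* h_T$ by {\bf A4}, this becomes $\|\nabla\varphi\|_{S(T)}\lesssim h_T^{-1}\|\varphi\|_{S(T)}$.

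It then remains to bound $\|\varphi\|_{S(T)}$ in terms of $\|\varphi\|_T$. As already observed in the proof of Lemma \ref{L2errorbound}, Assumption {\bf A3} implies the existence of a ball $B\subset T$ whose diameter is proportional to $h_T$. Since $T\subset S(T)$, the ball $B$ also sits inside $S(T)$, and its diameter is proportional to $h_{S(T)}$ as well, because $h_T$ and $h_{S(T)}$ are comparable by {\bf A4}. Applying Lemma \ref{appendix.thm} with $K=S(T)$ and the sub-ball $S=B$ to the degree-$n$ polynomial $\varphi$ yields $\|\varphi\|_{S(T)}^2\le C(\varsigma_*,n)\|\varphi\|_B^2\le C\|\varphi\|_T^2$, the last step because $B\subset T$. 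Chaining the three estimates gives $\|\nabla\varphi\|_T\lesssim h_T^{-1}\|\varphi\|_T$ with a constant depending only on $n$ and the shape-regularity parameters, which is the assertion.

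The only place where geometry genuinely intervenes — and the step I expect to be the main obstacle — is the existence of the interior ball $B\subset T$ of diameter comparable to $h_T$, since this is exactly what makes Lemma \ref{appendix.thm} applicable. It is a consequence of {\bf A1}–{\bf A3}: a face $e$ of $T$ has $(d-1)$-measure $\gtrsim h_T^{d-1}$ (by {\bf A1}–{\bf A2}), and the pyramid of height $\sim h_T$ erected over $e$, which stands up above $e$ under the acute-angle condition of {\bf A3}, contains a ball of the required size. Once this geometric fact is granted, the rest is a routine combination of the classical simplicial inverse inequality with Lemma \ref{appendix.thm}.
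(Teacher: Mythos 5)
Your proposal is correct and follows essentially the same route as the paper: pass to the circumscribed simplex $S(T)$ of Assumption \textbf{A4}, apply the standard simplicial inverse inequality there, and return to $T$ via the domain inverse inequality (Lemma \ref{appendix.thm}) with $K=S(T)$ and $S$ a ball inside $T$ of diameter proportional to $h_T$ furnished by \textbf{A3}. Your explicit remarks on the polynomial extension to $S(T)$ and on the comparability of $h_T$ and $h_{S(T)}$ simply make precise details the paper leaves implicit.
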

\begin{proof}
The proof is merely a combination of Lemma \ref{appendix.thm} and
the standard inverse inequality on d-simplices. To this end, for any
$T\in \T_h$, let $S(T)$ be the circumscribed simplex that is shape
regular. It follows from the standard inverse inequality that
$$
\|\nabla \varphi\|_T\leq \|\nabla\varphi\|_{S(T)}\leq
Ch_T^{-1}\|\varphi\|_{S(T)}.
$$
Then we use the estimate (\ref{aaa.58-new}), with $K=S(T)$, to
obtain
$$
\|\nabla \varphi\|_T\leq Ch_T^{-1}\|\varphi\|_{S}\leq
Ch_T^{-1}\|\varphi\|_{T},
$$
where $S$ is a ball inside of $T$ with a diameter proportional to
$h_T$. This completes the proof of the lemma.
\end{proof}

\newpage

\end{document}